\documentclass[a4paper,12pt]{article}

\usepackage{amscd}

\usepackage{amsmath}
\usepackage{ntheorem}
\usepackage{amssymb}
\usepackage{latexsym}
\usepackage{url,mathrsfs}
\usepackage[all]{xy}\UseComputerModernTips
\usepackage{graphicx}

\theoremstyle{plain}
\newtheorem{proposition}{Proposition}[section]
\newtheorem{theorem}[proposition]{Theorem}
\newtheorem{lemma}[proposition]{Lemma}

\newtheorem{conjecture}[proposition]{Conjecture}

\theoremstyle{plain}
\theorembodyfont{\normalfont\rm}
\newtheorem{definition}[proposition]{Definition}
\newtheorem{remark}[proposition]{Remark}
\newtheorem{example}[proposition]{Example}

\theoremstyle{nonumberplain}
\theoremheaderfont{\normalfont\itshape}
\theoremseparator{.}
\theorembodyfont{\normalfont}
\newtheorem{proof}{Proof}

\newcommand{\qed}{\hfill $\Box$}

\makeatletter

\@addtoreset{equation}{section}
\makeatother

\newcommand{\mer}{{\rm mero}}
\newcommand{\merc}{{\rm mero,c}}

\newcommand{\ZZ}{{\mathbb Z}}

\newcommand{\RR}{{\mathbb R}}
\newcommand{\CC}{{\mathbb C}}

\newcommand{\e}{\varepsilon}

\newcommand{\Db}{{\bf D}^{\mathrm{b}}}
\newcommand{\Dbc}{{\bf D}_{\mathrm{c}}^{\mathrm{b}}}

\newcommand{\F}{{\cal F}}
\newcommand{\G}{{\cal G}}

\renewcommand{\L}{{\mathcal L}}

\renewcommand{\O}{{\cal O}}

\newcommand{\simto}{\overset{\sim}{\longrightarrow}}

\def\cf{cf.\kern.3em}
\def\eg{e.g.\kern.3em}

\title{On the Topology of Fibers of Rational Functions 
\\[0.5em]
\large To the memory of Mihai Tib{\u a}r
\footnote{{\bf 2020 Mathematics 
Subject Classification: }32C38, 32S40, 32S55, 32S60}}

\author{Kiyoshi TAKEUCHI 
\footnote{Mathematical Institute, Tohoku University, 
Aramaki Aza-Aoba 6-3, Aobaku, Sendai, 980-8578, Japan. 
E-mail: takemicro@nifty.com} }
\date{\today}

\begin{document}

\maketitle

\begin{abstract}
In 1988 Broughton proved that if a complex polynomial 
function on $\CC^n$ is tame at infinity its fibers are 
homotopy equivalent to bouquets of some spheres. 
Since then several basic properties of polynomial functions 
were studied by many mathematicians in various 
situations. In this paper, we show some evidence for 
the possibility of extending Broughton's result 
to rational functions satisfying 
some tameness condition at infinity and the one at their 
points of indeterminacy. Specifically, assuming some 
conditions on a rational function on $\CC^n$ 
we prove that the cohomology groups of a 
generic fiber of it are isomorphic to those of a 
bouquet of $S^1$ and some spheres. In the course of the proof, 
the theory of the nearby and vanishing cycle sheaves 
for meromorphic functions will play an essential role. 
\end{abstract}

\maketitle

\section{Introduction}\label{section 1}

In \cite{Milnor}, for a complex hypersurface of 
$X= \CC^n$ Milnor proved that its 
Milnor fiber at an isolated singular point of it is 
homotopy euivalent to the bouquet 
$S^{n-1}\vee\cdots\vee S^{n-1}$ of some 
$(n-1)$-dimensional spheres $S^{n-1}$. 
After the discovery of this beautiful result, many 
mathematicians in different fields, such as 
singularity theory, algebraic geometry, topology, 
complex analysis, number theory and D-modules, 
studied them from various points of view. 
Globalizing the situation, in \cite{Broughton}, 
\cite{Paru} and \cite{S-T-1} for a complex 
polynomial function $f:X = \CC^n \longrightarrow \CC$ 
on $X= \CC^n$ satisfying some tameness condition 
at infinity, Broughton, Parusinski, Siersma and 
Tib{\u a}r proved that any fiber 
$f^{-1}(c) \subset X= \CC^n$ ($c \in \CC$) of it 
is homotopy equivalent to the bouquet of some 
$(n-1)$-dimensional spheres $S^{n-1}$: 
\begin{equation}
f^{-1}(c) \sim  S^{n-1}\vee\cdots\vee S^{n-1}.
\end{equation}
Since then several basic properties of polynomial functions 
were studied by a lot of mathematicians in various 
situations and the results thus obtained are surprisingly similar 
to the ones obtained in the local case. See for example 
\cite{ALM-1}, \cite{C-R}, \cite{Dimca2}, 
\cite{Dimca}, \cite{D-N}, \cite{D-S-1}, 
\cite{Douai}, \cite{E-T}, \cite{L-N-2}, \cite{G-L-M}, 
\cite{GZ-L-MH-new}, \cite{Ka-St-2}, \cite{L-S}, 
\cite{M-T-new3}, \cite{M-T-3}, \cite{M-T-2}, \cite{M-M}, \cite{N-Z-2}, 
\cite{N-N}, \cite{Raibaut-1.5}, \cite{Raibaut-1.8}, 
\cite{Sabbah}, \cite{Sabbah-2}, \cite{S-T}, 
\cite{Stapledon}, \cite{Tak-3}, \cite{T-T}, 
\cite{Tibar-book}. 

\medskip \par
The aim of this paper is to show some evidence for 
the possibility of extending the above mentioned fundamental 
result of \cite{Broughton}, \cite{Paru} and \cite{S-T-1} 
to rational functions on $X= \CC^n$ satisfying 
some tameness condition at infinity and the one at their 
points of indeterminacy. Specifically, assuming some 
conditions on a rational function 
\begin{equation}
f(z)= \frac{P(z)}{Q(z)}  \qquad (x \in X \setminus Q^{-1}(0)). 
\end{equation}
($P(z), Q(z) \in \CC [z_1, \ldots, z_n]$) on $X= \CC^n_z$ 
(see Section \ref{section 3} for the details),  
we prove that the cohomology groups of its 
generic fiber $f^{-1}(c) \subset X \setminus Q^{-1}(0)$ 
of $f: X \setminus Q^{-1}(0) \longrightarrow \CC$ 
are isomorphic to those of the bouquet 
$S^1 \vee (S^{n-1}\vee\cdots\vee S^{n-1})$ of $S^1$ and some 
$(n-1)$-dimensional spheres $S^{n-1}$. For this purpose, 
as well as some basic results on perverse sheaves, 
the theory of the nearby and vanishing cycle sheaves 
for meromorphic functions developed in 
\cite{N-T}, \cite{Raibaut-2} and \cite{Tak-4} are 
effectively used. Motivated by this cohomological 
result for the generic fiber $f^{-1}(c) \subset X \setminus Q^{-1}(0)$ 
of $f: X \setminus Q^{-1}(0) \longrightarrow \CC$, 
we then conjecture that it is also 
homotopy equivalent to $S^1 \vee (S^{n-1}\vee\cdots\vee S^{n-1})$: 
\begin{equation}
f^{-1}(c) \sim  S^1 \vee (S^{n-1}\vee\cdots\vee S^{n-1}).
\end{equation}
Moreover, a formula for the number of 
the spheres $S^{n-1}$ in this (conjectural)  
bouquet decomposition of $f^{-1}(c)$ will be given. 
By Lemma \ref{vc-lem} and \eqref{eff-tool}, we see also 
that our meromorphic vanishing cycle functor 
with compact support is a powerful tool to detect 
the change of the topologies of the fibers of $f$ 
at its points of indeterminacy. Namely we can study 
the ``singularities at indeterminacy" of $f$ by it. 
See Section \ref{section 3} for the details. 
In \cite{GZ-L-MH} Gusein-Zade, Luengo and Melle-Hern\'andez 
generalized Milnor's fibration theorem in \cite{Milnor} 
to meromorphic functions on complex manifolds 
and defined their Milnor fibers. Moreover they obtained 
a formula for their monodromy zeta functions. 
Since then many mathematicians studied 
basic properties of rational and meromorphic functions 
in various situations 
(see e.g. \cite{A-G-L-N}, \cite{B-P}, \cite{B-P-S}, \cite{GZ-L-MH-new-new}, 
\cite{Thang}, \cite{N-S-T}, \cite{N-T}, \cite{Raibaut-2}, \cite{S-T-2}, \cite{Tak-2}, 
\cite{Tibar}). However, in contrast to 
Milnor fibers of holomorphic functions, 
the geometric structures of those of meromorphic functions 
are not fully understood yet. In view of our results in this 
paper and \cite[Theorem 1.2]{N-T}, one may imagine that 
also the Milnor fiber of the meromorphic function in 
\cite[Theorem 1.2]{N-T} has the same 
homotopy type $S^1 \vee (S^{n-1}\vee\cdots\vee S^{n-1})$. 
It would be a challenging problem to prove it or 
give some evidence for it. 

\bigskip
\noindent{\bf Acknowledgement}: The author 
is very grateful to Professor Adam Parusinski and 
Ren Fernandes for some stimulating discussions with them 
during the preparation of this paper.

\section{Preliminary notions and results}\label{section 2}
In this section, we introduce preliminary notions and results 
which will be used in this paper. 
We essentially follow the terminology of \cite{Dimca}, \cite{H-T-T}, 
\cite{K-S} and \cite{Sch}. 
For example, for a topological space $X$ we denote by $\Db(X)$ the derived category whose 
objects are bounded complexes of sheaves of $\CC_X$-modules on $X$. For a complex 
analytic space $X$, let $\Dbc(X)$ be the full subcategory of $\Db(X)$ consisting 
of constructible complexes of sheaves. Recall that 
for a complex manifold $X$ of dimension $n$ we have 
the (generalized) Poincar\'e duality isomorphisms 
\begin{equation}
H^j(X; \CC_X) \simeq [ H^{2n-j}_c(X; \CC_X)]^* 
\quad (j \in \ZZ ). 
\end{equation}
In this paper, we employ the convention 
that for such $X$ the shifted constant sheaf $\CC_X [n] \in \Dbc (X)$ 
is a perverse sheaf on $X$. In what follows, let 
$X$ be a complex manifold of dimension $n$ 
and $f\colon X \longrightarrow \CC$ 
a non-constant holomorphic function on it. Set 
$X_0:=f^{-1}(0)\subsetneqq X$. Then for any point 
$x \in X_0=f^{-1}(0)\subset X$ of 
$X_0$ we can define the Milnor fiber 
$F_x \subset X \setminus X_0$ of $f$ at it  
(see \cite{Milnor}). 
The following beautiful theorem was proved by Milnor in 
\cite{Milnor} 

\begin{theorem}\label{thm.7.2.003}
{\rm (Milnor \cite{Milnor})} 
In the situation as above, assume that 
the complex hypersurface $X_0=f^{-1}(0)\subset X$ 
has an isolated singular point at a point 
$x \in X_0=f^{-1}(0)\subset X$. 
Then the Milnor fiber $F_x \subset X \setminus X_0$ of $f$ at it 
is homotopy equivalent to the bouquet of some 
$(n-1)$-dimensional spheres $S^{n-1}$: 
\begin{equation}
F_x  \sim  S^{n-1}\vee\cdots\vee S^{n-1}.
\end{equation}
In particular, for its cohomology groups 
$H^j(F_x; \CC )$ ($j \in \ZZ$) we have 
\begin{equation}
H^j(F_x; \CC ) \simeq 
\begin{cases}
\ \CC & (j=0) \\
& \\
\ \CC^{\mu} & (j=n-1) \\
 & \\
\ 0 & (j \neq 0,n-1), 
\end{cases}
\end{equation}
where $\mu>0$ is the number of spheres $S^{n-1}$ in 
the above bouquet decomposition. 
\end{theorem}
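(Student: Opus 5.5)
We would prove Theorem \ref{thm.7.2.003} along the classical lines of Milnor, in three steps. Choose local coordinates with $x=0$, a Milnor ball $B_\e$ and $0<|t|\ll\e$, so that $F_x = f^{-1}(t)\cap B_\e$ is a compact $2(n-1)$-dimensional manifold with boundary. It is a smooth affine-type Stein piece, since it is a closed analytic subset of a Stein domain.

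\emph{Step 1: the Milnor fiber is $(n-2)$-connected.} We would use the Morse function $\varphi(z)=\|z\|^2$ restricted to $F_x$. Its critical points have index at most $n-1$, because $F_x$ is a complex manifold of complex dimension $n-1$ and $\varphi$ is strictly plurisubharmonic. Hence $F_x$ has the homotopy type of a CW complex of dimension $\le n-1$ (Andreotti--Frankel). For connectivity, we would compare with the conical structure. The Milnor fibration $f/|f|\colon S_\e\setminus K\to S^1$, with link $K=X_0\cap S_\e$, has fiber diffeomorphic to $F_x$. Because the singularity is isolated, $K$ is a smooth $(2n-3)$-manifold, and Milnor's argument shows that $S_\e\setminus K$ and the fiber are $(n-2)$-connected. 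The idea is to push cycles of dimension $\le n-2$ off the fiber along the gradient of $\log|f|$, using the curve selection lemma to show that this gradient flow has no obstructions near $K$.

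\emph{Step 2: passing to the homotopy type.} A simply connected CW complex of dimension $n-1$ which is $(n-2)$-connected has free $H_{n-1}$ and vanishing homology in other positive degrees. By the Hurewicz and Whitehead theorems it is therefore homotopy equivalent to a wedge of spheres $S^{n-1}$. This uses that $H_{n-1}$ of an $(n-1)$-dimensional complex is free, since it is a subgroup of the chain group. For $n\ge 3$ this is immediate. For $n=2$ the fiber is a connected Riemann surface with nonempty boundary, hence homotopy equivalent to a wedge of circles. For $n=1$ it is a finite set of points, which is the degenerate bouquet of copies of $S^0$. The cohomology formula then follows by taking $\CC$-coefficients.

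\emph{Step 3: positivity of $\mu$.} The statement asserts $\mu>0$, which needs $x$ to be a genuine singular point. Here we would identify $\mu$ with $\dim_\CC \O_{X,x}/(\partial f/\partial z_1,\dots,\partial f/\partial z_n)$ by a morsification argument. A small generic perturbation $f_s=f-\sum a_i z_i$ has only Morse critical points in $B_\e$, and each contributes one vanishing sphere. Their number is the local degree of $\mathrm{grad}\, f$ at $x$, which is positive because $\mathrm{grad}\, f(x)=0$.

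The main obstacle is the connectivity statement in Step 1. Its proof requires the full Milnor machinery: the curve selection lemma and the construction of the vector field realizing the fibration. The index bound and the Hurewicz--Whitehead argument in Step 2 are standard.
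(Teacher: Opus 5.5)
The paper gives no proof of this theorem; it is quoted from Milnor, so your outline has to stand on its own. The Andreotti--Frankel bound, the Hurewicz--Whitehead step and the degree argument for $\mu>0$ are fine. However, the step you yourself flag as the main obstacle is not supplied, and the one concrete claim you make there is false. For $n\ge 3$ the complement $S_\e\setminus K$ is never $(n-2)$-connected. It fibers over $S^1$ with connected fiber $F_x$, so $\pi_1(S_\e\setminus K)\to\pi_1(S^1)=\ZZ$ is onto. Equivalently, by Alexander duality, $H_1(S_\e\setminus K;\ZZ)\cong H^{2n-3}(K;\ZZ)\neq 0$. The homotopy sequence of the fibration gives $\pi_i(F_x)\cong\pi_i(S_\e\setminus K)$ for $i\ge 2$ and $\pi_1(F_x)\cong\ker(\pi_1(S_\e\setminus K)\to\ZZ)$. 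So passing to the total space buys nothing: $\pi_1(F_x)=0$ is equivalent to $\pi_1(S_\e\setminus K)\cong\ZZ$, which is just as hard. The remark about pushing cycles along $\mathrm{grad}\log|f|$ is not an argument. A homological cycle-pushing argument would at best give $\tilde H_i(F_x)=0$ for $i\le n-2$, not the simple connectivity that Step 2 needs when $n\ge 3$.

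One way to close the gap is the Lefschetz argument, built on the morsification you already use in Step 3. For $0<\eta\ll\e$ the tube $T=f^{-1}(\{|t|\le\eta\})\cap B_\e$ retracts onto the cone $X_0\cap B_\e$, hence is contractible. This persists for $f_s=f-\sum a_iz_i$ with $a$ small and generic, and the fiber of $f_s$ over a point $t$ with $|t|=\eta$ is still diffeomorphic to $F_x$. Using the Morse lemma and the transversality of the fibers to $S_\e$, $T$ is homotopy equivalent to $F_x$ with one $n$-cell (a Lefschetz thimble) attached for each Morse point of $f_s$. Cellular approximation then gives $\pi_i(F_x)\cong\pi_i(T)=0$ for $i\le n-2$. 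The pair sequence gives $\tilde H_j(F_x)\cong H_{j+1}(T,F_x)$, which is free of rank equal to the number of Morse points for $j=n-1$ and zero otherwise. This supplies the missing connectivity. It also justifies your unproved assertion in Step 3 that each Morse point contributes exactly one sphere.
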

We call the number $\mu >0$ in Theorem \ref{thm.7.2.003} 
the Milnor number of $f$ (or of $X_0=f^{-1}(0)$) 
at the point $x \in X_0=f^{-1}(0)\subset X$. 
Note that the Milnor fiber $F_x \subset X 
\setminus X_0$ in 
Theorem \ref{thm.7.2.003} is an  
$(n-1)$-dimensional complex manifold i.e. a 
$(2n-2)$-dimensional real analytic manifold. 
Theorem \ref{thm.7.2.003} asserts that it 
has the homotopy type of the real 
$(n-1)$-dimensional topological space 
$S^{n-1}\vee\cdots\vee S^{n-1}$. 
From now, we shall explain how 
Milnor fibers and their monodromies can be effectively 
studied by the more sophisticated machineries 
of constructible sheaves. 
Let $i\colon X_0 = f^{-1}(0) \hookrightarrow X$
and $j\colon X\setminus X_0 \hookrightarrow X$ be 
the inclusion maps. For $\CC^* := \CC \setminus\{0\}$ 
we regard the exponential map 
\begin{equation}
\exp : \widetilde{\CC^*} := \CC  \longrightarrow \CC^*, 
\qquad (t \longmapsto \exp (t)) 
\end{equation}
as its universal covering. Then for 
the fiber product $\widetilde{X\setminus X_0}$ of 
$\exp : \widetilde{\CC^*} = \CC  \longrightarrow \CC^*$ and 
$f|_{X\setminus X_0}\colon X\setminus X_0 \longrightarrow \CC^*$
we obtain the following commutative diagram. 

\begin{equation}\begin{xy}\xymatrix@C=30pt{
& & \,\widetilde{X\setminus X_0}\, \ar[ld]_-{j\circ\pi} \ar[d]_-{\pi} \ar[r] 
\ar@{}[dr]|{\square} & \,\widetilde{\CC^*} \ar[d]^-{\mathrm{exp}}
 & \hspace{-48pt} {\,\simeq\,\CC} & \\
X_0=f^{-1}(0)\,\,\, \ar@{^{(}->}@<-0.3ex>[r]_-{i} & \,X\, & \,X\setminus X_0\, 
\ar@{_{(}->}@<+0.3ex>[l]^-{j} \ar[r]_-{f|_{X\setminus X_0}} & \,\CC^*. &
}\end{xy}\end{equation}
Here $\Box$ stands for the Cartesian diagram. 

\begin{definition}\label{def.7.1.001}
{\rm (Deligne~\cite{Deligne})} 
For $\F \in \Db (X)$ we define its nearby cycle 
complex $\psi_f( \F )\in \Db (X_0)$ along $f$ by 
\begin{equation}
\psi_f( \F ) = i^{-1} {\rm R} (j\circ\pi)_*(j\circ\pi)^{-1} \F
\ \in \Db (X_0).
\end{equation}
Moreover, we define the vanishing cycle 
complex $\phi_f( \F )\in \Db (X_0)$ of $\F \in \Db (X)$ 
along $f$ so that it fits into the distinguished triangle 
\begin{equation}
i^{-1} \F \longrightarrow \psi_f( \F ) \longrightarrow 
\phi_f( \F ) \overset{+1}{\longrightarrow}, 
\end{equation}
where the morphism $i^{-1} \F  \longrightarrow \psi_f( \F )$ 
is induced by the one 
${\rm id} \longrightarrow {\rm R} (j\circ\pi)_*(j\circ\pi)^{-1}$ 
of functors (for the more precise definition, see 
Kashiwara-Schapira \cite[page 351]{K-S}). 
\end{definition}
We thus obtain two functors 
\begin{equation}
\psi_f( \cdot ), \ \phi_f( \cdot ) \colon \Db (X) \longrightarrow \Db (X_0). 
\end{equation}
We call them the nearby and vanishing cycle functors along $f$ 
respectively. Moreover for $\F \in \Db (X)$ 
there exist its monodromy automorphisms 
\begin{equation}
\Psi_f( \F ) \colon \psi_f( \F ) \simto \psi_f( \F ), \qquad 
\Phi_f( \F ) \colon \phi_f( \F ) \simto \phi_f( \F ). 
\end{equation} 
Now we recall the following basic result. 

\begin{proposition}\label{prp.7.1.002}
{\rm (see e.g. \cite[Exercise VIII.15]{K-S} and 
\cite[Proposition 4.2.11]{Dimca})}  
Let $\rho \colon Y  \longrightarrow X$ be a 
morphism of complex manifolds 
and $f\colon X  \longrightarrow \CC$ a non-constant 
holomorphic function on $X$. We set 
$g:=f\circ\rho\colon Y  \longrightarrow \CC$ and 
\begin{equation}
X_0=f^{-1}(0)\subset X,\quad Y_0=g^{-1}(0)=\rho^{-1}(X_0)\subset Y. 
\end{equation}
Let $\rho|_{Y_0}\colon Y_0 \longrightarrow X_0$ be the 
restriction of $\rho$ to $Y_0\subset Y$ and assume that 
$\rho \colon Y  \longrightarrow X$ is proper on 
the support of $\G \in \Db (Y)$. Then there exist isomorphisms 
\begin{equation}
\psi_f( {\rm R} \rho_* \G ) \simeq {\rm R} (\rho|_{Y_0})_*\psi_g( \G ), \qquad 
\phi_f( {\rm R} \rho_* \G ) \simeq {\rm R} (\rho|_{Y_0})_*\phi_g( \G ). 
\end{equation} 
\end{proposition}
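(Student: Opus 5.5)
The plan is to reduce both isomorphisms to base change for proper morphisms, treating $\psi$ first and then deducing $\phi$ from the defining triangle. Write $j_X\colon X\setminus X_0\hookrightarrow X$, $i_X\colon X_0\hookrightarrow X$, and $\pi_X\colon \widetilde{X\setminus X_0}\to X\setminus X_0$. Let $j_Y$, $i_Y$, $\pi_Y$ be the analogous maps for $g$ on $Y$. Since $g=f\circ\rho$, we have $Y\setminus Y_0=\rho^{-1}(X\setminus X_0)$. The fiber product $\widetilde{Y\setminus Y_0}$ of $g$ with $\exp$ is then canonically $(Y\setminus Y_0)\times_{X\setminus X_0}\widetilde{X\setminus X_0}$. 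This gives a map $\tilde\rho\colon \widetilde{Y\setminus Y_0}\to\widetilde{X\setminus X_0}$, and the square formed by $j_Y\circ\pi_Y$, $j_X\circ\pi_X$, $\tilde\rho$, $\rho$ is Cartesian.

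For the nearby cycles I would compute as follows:
\[
\psi_f({\rm R}\rho_*\G)=i_X^{-1}{\rm R}(j_X\pi_X)_*(j_X\pi_X)^{-1}{\rm R}\rho_*\G .
\]
The first step applies base change along the Cartesian square above:
\[
(j_X\pi_X)^{-1}{\rm R}\rho_*\G\simeq {\rm R}\tilde\rho_*(j_Y\pi_Y)^{-1}\G .
\]
This is valid because $j_X\pi_X$ is a local homeomorphism onto an open set, so inverse image commutes with ${\rm R}\rho_*$ here with no properness needed. The second step rewrites ${\rm R}(j_X\pi_X)_*{\rm R}\tilde\rho_*={\rm R}\rho_*{\rm R}(j_Y\pi_Y)_*$ using $\rho\circ j_Y\pi_Y=j_X\pi_X\circ\tilde\rho$. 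The third step applies proper base change for the closed inclusion $i_X$:
\[
i_X^{-1}{\rm R}\rho_*\mathcal{H}\simeq {\rm R}(\rho|_{Y_0})_*i_Y^{-1}\mathcal{H}
\quad\text{with } \mathcal{H}={\rm R}(j_Y\pi_Y)_*(j_Y\pi_Y)^{-1}\G .
\]
Together these give $\psi_f({\rm R}\rho_*\G)\simeq{\rm R}(\rho|_{Y_0})_*\psi_g(\G)$.

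The main obstacle is justifying the proper base change in the third step. The complex $\mathcal{H}$ need not be supported where $\rho$ is proper, so I would first show that $\mathcal{H}$ is supported in ${\rm supp}\,\G$. Indeed $(j_Y\pi_Y)^{-1}\G$ is supported in $(j_Y\pi_Y)^{-1}({\rm supp}\,\G)$. Its direct image has stalks vanishing on the open set $Y\setminus{\rm supp}\,\G$, since over that open set the restriction of $(j_Y\pi_Y)^{-1}\G$ is zero. So $\mathcal{H}$ is supported on the closed set ${\rm supp}\,\G$, on which $\rho$ is proper. In this setting ${\rm R}\rho_*={\rm R}\rho_!$ on such objects, and base change for ${\rm R}\rho_!$ applies, as in \cite[Prop.\ 2.6.7]{K-S}. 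The same support remark makes the second step legitimate. Finally, I would check that the isomorphism is compatible with monodromy, since the deck transformation $t\mapsto t+2\pi i$ commutes with $\tilde\rho$.

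For the vanishing cycles, I would apply the exact functor ${\rm R}(\rho|_{Y_0})_*$ to the triangle $i_Y^{-1}\G\to\psi_g(\G)\to\phi_g(\G)\xrightarrow{+1}$. By proper base change, $i_X^{-1}{\rm R}\rho_*\G\simeq{\rm R}(\rho|_{Y_0})_*i_Y^{-1}\G$. I would also check that the natural morphism $i^{-1}\to\psi$ induced by ${\rm id}\to{\rm R}(j\pi)_*(j\pi)^{-1}$ is compatible with the isomorphisms above, which follows by naturality of the adjunction units under base change. The resulting morphism of triangles then yields $\phi_f({\rm R}\rho_*\G)\simeq{\rm R}(\rho|_{Y_0})_*\phi_g(\G)$. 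The cone is not functorial, so to avoid ambiguity I would use the functorial definition of $\phi$ from \cite[page 351]{K-S}, which is built directly from these functors, so the same base change argument applies verbatim.
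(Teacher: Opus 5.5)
Your proposal is correct. The paper gives no proof of its own and only cites \cite[Exercise VIII.15]{K-S} and \cite[Proposition 4.2.11]{Dimca}; your argument is the standard base-change proof behind those references: base change along the Cartesian square for the local homeomorphism $j_X\circ\pi_X$, composition of direct images, proper base change for $i_X$ once you check that ${\rm R}(j_Y\pi_Y)_*(j_Y\pi_Y)^{-1}\G$ is supported in ${\rm supp}\,\G$, and then the morphism of triangles for $\phi$. One remark is superfluous: your second step is just ${\rm R}(a\circ b)_*\simeq{\rm R}a_*\circ{\rm R}b_*$ and needs no support condition; the support condition is used only in the third step, both for ${\rm R}\rho_*\simeq{\rm R}\rho_!$ and for ${\rm R}(\rho|_{Y_0})_!\simeq{\rm R}(\rho|_{Y_0})_*$.
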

By the following basic results we can study 
Milnor fibers and their monodromies via nearby and 
vanishing cycle sheaves. 

\begin{theorem}\label{thm.7.2.004}
{\rm (see e.g. \cite[Proposition 4.2.2]{Dimca} 
and \cite[Theorem 2.6]{Tak-3})} 
For any point 
$x \in X_0$ of $X_0=f^{-1}(0)$ there exist isomorphisms 
\begin{equation}
H^j \psi_f( \CC_X )_x \simeq H^j(F_x; \CC ),
\quad 
H^j \phi_f(  \CC_X )_x \simeq \widetilde{H}^j(F_x; \CC )
\qquad (j\in\ZZ), 
\end{equation}
where $\widetilde{H}^j(F_x; \CC )$ stand for 
the reduced cohomology groups of $F_x$. 
Moreover these isomorphisms are compatible 
with the monodromy automorphisms of the both sides. 
\end{theorem}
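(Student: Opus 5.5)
The plan is to compute the stalk of $\psi_f(\CC_X)$ at $x$ directly from Definition \ref{def.7.1.001}, using that $i^{-1}$ of a derived direct image has stalks given by cohomology over shrinking neighborhoods. Concretely, $H^j\psi_f(\CC_X)_x \simeq \varinjlim_{U \ni x} H^j((j\circ\pi)^{-1}(U); \CC)$, where $(j\circ\pi)^{-1}(U) = \widetilde{U\setminus X_0}$ is the fiber product of $U \setminus X_0 \to \CC^*$ with $\exp$. First I fix a Milnor ball $B_\e$ around $x$ and a small disk $D_\eta^*$ such that, by Milnor's fibration theorem (in Lê's form), $f\colon B_\e\cap f^{-1}(D_\eta^*) \to D_\eta^*$ is a locally trivial fibration with fiber $F_x$. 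The sets $U_{\e,\eta}=B_\e\cap f^{-1}(D_\eta)$ form a cofinal system of neighborhoods of $x$ in $X$, so the limit may be computed along them.

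Next I pull the fibration back along $\exp$. The preimage of $D_\eta^*$ in $\widetilde{\CC^*}=\CC$ is a half-plane $\{{\rm Re}\, t < \log \eta\}$, which is contractible. A locally trivial fibration over a contractible paracompact base is trivial up to homotopy, so $\widetilde{U_{\e,\eta}\setminus X_0}$ is homotopy equivalent to $F_x$. Hence $H^j$ of it equals $H^j(F_x;\CC)$. Since the fibration structure is stable under shrinking $\e,\eta$ suitably (the conic structure and transversality make the restriction maps isomorphisms), the inductive system is essentially constant and gives $H^j\psi_f(\CC_X)_x \simeq H^j(F_x;\CC)$. For monodromy, the deck transformation $t\mapsto t+2\pi i$ acts on the fiber product, and under the trivialization it corresponds to the geometric monodromy of the Milnor fibration over the loop in $D_\eta^*$. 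The morphism $\Psi_f$ is defined by exactly this deck transformation, so compatibility follows.

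For $\phi_f$, I take stalks of the triangle $i^{-1}\CC_X\to\psi_f(\CC_X)\to\phi_f(\CC_X)\xrightarrow{+1}$ at $x$, which gives the long exact sequence $\cdots\to H^j(\{x\};\CC)\to H^j(F_x;\CC)\to H^j\phi_f(\CC_X)_x\to\cdots$. The map $\CC = (i^{-1}\CC_X)_x \to H^0(F_x;\CC)$ is the pullback along $F_x\to U_{\e,\eta}$, with $U_{\e,\eta}$ contractible, i.e. pullback along $F_x\to{\rm pt}$. It is therefore injective in degree $0$ (as $F_x\neq\emptyset$), and its cokernel defines reduced cohomology. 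This yields $H^j\phi_f(\CC_X)_x\simeq\widetilde H^j(F_x;\CC)$, compatibly with monodromy because the monodromy acts trivially on the constant part.

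I expect the main obstacle to be justifying that the inductive limit is attained, i.e. that restriction between different Milnor tubes induces isomorphisms on cohomology. I will handle this with the standard argument via the existence of Milnor radii: for $\e'<\e$ and $\eta'\ll\e'$, the inclusion of tubes is a homotopy equivalence, which follows from the conic structure theorem and the transversality of $f$-fibers to spheres.
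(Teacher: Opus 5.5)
Your argument is correct and is the standard proof found in the references the paper cites; the paper gives no proof of its own and defers to \cite[Proposition 4.2.2]{Dimca} and \cite[Theorem 2.6]{Tak-3}. The steps match that proof: compute the stalk over cofinal Milnor tubes, pull the Milnor fibration back to the contractible half-plane, use transversality of fibers to spheres to see that the inductive system is constant, and read off $\phi_f$ from the stalk of the defining triangle. One small simplification: you do not need the tube $U_{\e,\eta}$ to be contractible, since $(i^{-1}\CC_X)_x\simeq\CC$ sits in degree $0$ and the map to $H^0(F_x;\CC)$ sends $1$ to the constant function $1$, which is nonzero because $F_x\neq\emptyset$.
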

More generally, we have the following result. 

\begin{theorem}\label{thm.7.2.006}
For any constructible sheaf $\F \in \Dbc (X)$ on $X$ and 
any point $x \in X_0$ of $X_0=f^{-1}(0)$ there exist isomorphisms 
\begin{equation}
 H^j \psi_f( \F )_x \simeq H^j(F_x; \F )
 \qquad (j\in\ZZ).
\end{equation}
\end{theorem}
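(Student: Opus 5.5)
The plan is to compute the stalk of $\psi_f(\F)$ at $x$ directly from the definition and then identify the result with the hypercohomology of $\F$ on the Milnor fiber, using the conic structure of constructible sheaves near $x$. Fix a local embedding so that $x$ has a neighbourhood in $\CC^N$. Choose $0<\eta \ll \e \ll 1$. Put $B_\e$ for the open ball of radius $\e$ centered at $x$ and $D_\eta^* := \{ 0<|t|<\eta \}$. Since $i^{-1}$ is just restriction, we have
$$H^j\psi_f(\F)_x \simeq \varinjlim_{U \ni x} H^j\bigl(\pi^{-1}(U\setminus X_0); (j\circ\pi)^{-1}\F\bigr).$$
The neighbourhoods $U_{\e,\eta}:=B_\e\cap f^{-1}(D_\eta)$ are cofinal, so it is enough to show that for $\e$ fixed and small and $0<\eta\ll\e$ the group $H^j(\pi^{-1}(U_{\e,\eta}\setminus X_0); \F)$ stabilizes and equals $H^j(F_x;\F)$. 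Here $F_x = B_\e\cap f^{-1}(t)$ with $0<|t|<\eta$.

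The key geometric input is a Whitney stratification of $X$ adapted to $\F$ and to $X_0$, together with the Milnor–Lê fibration theorem relative to this stratification. For $0<\eta\ll\e\ll1$ the map $f\colon B_\e\cap f^{-1}(D_\eta^*)\to D_\eta^*$ is a stratified topological locally trivial fibration, and $\F$ is locally constant along the fibers' strata. By Thom's first isotopy lemma, this trivialization can be chosen compatible with the stratification. Pulling back along $\exp$ gives a fibration over the contractible base $\widetilde{D_\eta^*}$ (a half plane), so the restriction of $\F$ to $\pi^{-1}(U_{\e,\eta}\setminus X_0)$ is isomorphic to the pullback of $\F|_{F_x}$ along the projection $F_x\times \widetilde{D_\eta^*}\to F_x$. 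A Künneth/homotopy invariance argument, i.e. $R p_* p^{-1}\G\simeq\G$ for $p$ with contractible fibers, then gives
$$H^j\bigl(\pi^{-1}(U_{\e,\eta}\setminus X_0);\F\bigr)\simeq H^j(F_x;\F).$$

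It remains to see that the transition maps in the inductive limit are isomorphisms. For $\e'<\e$ and correspondingly small $\eta'$, restriction from $F_x^{(\e)}$ to $F_x^{(\e')}$ must be an isomorphism on $\F$-cohomology. I would deduce this from the conic structure: $r=|z-x|^2$ has no stratified critical points on $f^{-1}(t)\cap(\overline{B_\e}\setminus B_{\e'})$ for $|t|$ small, by the curve selection lemma. So $F^{(\e)}$ retracts onto $F^{(\e')}$ by a stratified deformation, and $\F$-cohomology is invariant since $\F$ is constructible along the strata, using for instance the non-characteristic deformation lemma of Kashiwara–Schapira.

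The main obstacle is exactly this step: making the stratified fibration and the radial invariance rigorous for a general constructible complex $\F$, rather than for $\CC_X$. The cleanest route may be to argue by dévissage on the cohomology sheaves and strata, reducing to the case $\F=\CC_Z$ for closed analytic $Z$. In that case Theorem \ref{thm.7.2.004} applied to $f|_Z$, or its singular-space version, gives the result, and the five lemma finishes the argument.
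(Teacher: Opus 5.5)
The paper gives no proof of this statement; it is quoted as the constructible version of Theorem \ref{thm.7.2.004}. Your outline is the standard argument, and most of it is sound. The stalk formula, the cofinality of the tubes $U_{\e,\eta}$, the recognition that the transition maps in $\e$ must be controlled, and the use of a Whitney stratification adapted to $\F$ with $X_0$ a union of strata are all fine. You should take the stratification to satisfy Thom's $a_f$ condition. That condition gives the transversality of $\partial B_r$ to the strata $S\cap f^{-1}(t)$ for $\e'\le r\le\e$ and $0<|t|\ll\e'$. Both Thom's isotopy lemma and your non-characteristic deformation argument need this transversality, the latter because $\F|_{f^{-1}(t)}$ has micro-support in the union of the conormal bundles of these strata.

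The genuine gap is the step you flag yourself, and your proposed d\'evissage does not close it. A stratum-preserving homeomorphism $\Phi\colon F_x\times\widetilde{D^*_\eta}\simto\pi^{-1}(U_{\e,\eta}\setminus X_0)$ does not by itself show that $\G:=\Phi^{-1}(j\circ\pi)^{-1}\F$ is the pull-back of its restriction to a slice. Reducing to sheaves $\CC_Z$ cannot work, for two reasons.
\begin{itemize}
\item The building blocks of a constructible complex are extensions by zero $j_{S!}L$ of local systems on strata, and $L$ may have non-unipotent monodromy on $S\cap B_\e$. An example is $L=f^{-1}L_\lambda$ on $S=X\setminus X_0$, with $L_\lambda$ of rank one on $\CC^*$ and monodromy $\lambda$ not a root of unity. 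By contrast, everything built from the $\CC_Z$ by shifts, cones and direct summands has cohomology sheaves with unipotent monodromy on a dense open set.
\item Theorem \ref{thm.7.2.004} concerns $\CC_X$ on a manifold. Its ``singular-space version'' for $\CC_Z$ is, by proper base change, exactly the present statement for $\F=\CC_Z$, so the reduction would be circular anyway.
\end{itemize}

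Prove the step directly instead. Since $\pi$ is a covering map, $(j\circ\pi)^{-1}\F$ has locally constant cohomology sheaves on each $\pi^{-1}(S)$, for $S$ a stratum contained in $X\setminus X_0$. Since $\Phi$ maps every line $\{y\}\times\widetilde{D^*_\eta}$ into one of these sets, each restriction $\G|_{\{y\}\times\widetilde{D^*_\eta}}$ has locally constant cohomology sheaves. Now write the half plane as a product of two open intervals. Apply twice the standard homotopy-invariance lemma for a complex on $Y\times I$ whose restrictions to all fibers $\{y\}\times I$ are locally constant, namely $q^{-1}{\rm R}q_*\G\simeq\G$ and ${\rm R}q_*\G\simeq\G|_{Y\times\{s\}}$ (proved via proper base change over compact subintervals). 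You get that restriction to a slice induces ${\rm R}\Gamma(\pi^{-1}(U_{\e,\eta}\setminus X_0);(j\circ\pi)^{-1}\F)\simto{\rm R}\Gamma(F_x;\F)$, which is what your second paragraph asserts.
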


\begin{theorem}\label{thm.7.1.004}
{\rm (see e.g. \cite[Proposition 8.6.3]{K-S}, 
\cite[page 103]{Dimca} and \cite[Theorem 4.0.2 and Lemma 4.2.1]{Sch})} 
Assume that $\F \in \Db(X)$ is constructible. Then 
$\psi_f( \F ), \phi_f( \F ) \in  \Db(X_0)$ 
are also constructible. Namely there exist two  
functors 
\begin{equation}
\psi_f(  \cdot  ),\ 
\phi_f(  \cdot  ) \colon \Dbc (X) \longrightarrow \Dbc (X_0).
\end{equation}
\end{theorem}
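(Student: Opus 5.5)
The statement to prove is Theorem \ref{thm.7.1.004}: for constructible $\F$, both $\psi_f(\F)$ and $\phi_f(\F)$ are constructible on $X_0$. The plan is to prove constructibility of $\psi_f(\F)$ first and then obtain it for $\phi_f(\F)$ from the defining triangle.

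\textbf{Reduction to $\psi_f$.} Take the distinguished triangle $i^{-1}\F \to \psi_f(\F)\to\phi_f(\F)\overset{+1}{\to}$ of Definition \ref{def.7.1.001}. The object $i^{-1}\F$ is constructible on $X_0$, because the inverse image of a constructible complex by a closed analytic embedding is constructible. The category $\Dbc(X_0)$ is a triangulated subcategory of $\Db(X_0)$, so the cone of a morphism between constructible objects is constructible. Hence it suffices to show $\psi_f(\F)\in\Dbc(X_0)$.

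\textbf{Choosing a good stratification.} Choose a complex analytic Whitney stratification $X=\bigsqcup_\alpha X_\alpha$ with the following properties:
\begin{itemize}
\item $X_0$ is a union of strata;
\item $\F$ has locally constant cohomology sheaves of finite rank on each stratum;
\item the stratification satisfies Thom's $a_f$ condition relative to $f$.
\end{itemize}
Such a stratification exists by Hironaka, or by Lê and Teissier for the $a_f$ condition. Refining it, we may also assume that $\F$ is constructible with respect to it.

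\textbf{Finite-dimensional stalks.} By Theorem \ref{thm.7.2.006}, $H^j\psi_f(\F)_x\simeq H^j(F_x;\F)$, where $F_x=B(x,\e)\cap f^{-1}(t)$ with $0<|t|\ll\e\ll1$. By the conic structure theorem and Whitney's conditions, $F_x$ is a relatively compact subanalytic set that is stratified compatibly with $\F$. Its cohomology with coefficients in the constructible complex $\F$ is therefore finite-dimensional; for instance, it equals the cohomology of a compact subanalytic neighborhood, by the finiteness theorem for proper direct images of constructible sheaves. The same argument shows the stalks vanish for $|j|\gg0$, since $\F$ is bounded and $F_x$ has bounded real dimension.

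\textbf{Local constancy along strata.} Fix a stratum $S\subset X_0$ and $x\in S$. The key step, and the main obstacle, is local topological triviality: near $x$, the family of Milnor tubes $f^{-1}(D_\eta^*)\cap B(y,\e)$ for $y\in S$ near $x$ is fibre-preserving homeomorphic to a product with a neighborhood of $x$ in $S$, compatibly with the stratification and with $f$. This follows from Thom's second isotopy lemma, which is exactly where the $a_f$ condition is used. Granting it, the restriction $\psi_f(\F)|_S$ is computed as $R(j\circ\pi)_*$ along a locally trivial family. Its cohomology sheaves are therefore locally constant, with stalks $H^j(F_y;\F)$ that are independent of $y\in S$ near $x$.

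If one prefers to avoid the isotopy lemma, the alternative is microlocal, following Kashiwara–Schapira. The estimate $SS(\psi_f\F)\subset$ the image of $SS(\F)$ through the graph of $df$ shows that $SS(\psi_f\F)$ is contained in a closed conic $\CC$-analytic Lagrangian set. One then invokes \cite[Theorem 8.5.5]{K-S}, which says that an object whose micro-support is contained in such a set is weakly constructible; combined with the finite stalks obtained above, this gives constructibility.
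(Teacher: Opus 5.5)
Your proposal is correct as a sketch, and it is essentially the standard argument behind the references the paper cites in place of a proof, since the paper gives no proof of its own. That argument runs: a Whitney stratification with Thom's $a_f$ condition adapted to $\F$ and $X_0$, finite stalks via the Milnor fibre, local constancy along strata via stratified local triviality, and $\phi_f(\F)$ from the defining triangle; your microlocal variant is the Kashiwara--Schapira style of proof.

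Two steps should be stated more carefully. First, local constancy requires the restriction maps from the cohomology of a fixed Milnor tube over a small neighbourhood $V$ of $x$ in $S$ to the stalks $(\psi_f\F)_y$, $y\in V$, to be isomorphisms; abstractly isomorphic stalks $H^j(F_y;\F)$ would not suffice, and your trivialization is what supplies these isomorphisms. Second, take the $a_f$-refinement of a stratification already adapted to $\F$ and $X_0$, rather than refining afterwards, since an arbitrary refinement need not preserve the Whitney and $a_f$ conditions.
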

Shifting the functors $\psi_f( \cdot ), \phi_f( \cdot )$ by 
$-1 \in \ZZ$ we set 
\begin{equation}
^p\!\psi_f( \cdot ) := \psi_f( \cdot )[-1], \quad 
^p\!\phi_f( \cdot ) := \phi_f( \cdot )[-1].
\end{equation}
Then we have the following result. 

\begin{theorem}\label{thm.7.2.007}
Assume that $\F \in \Dbc (X)$ is a perverse 
sheaf on $X$. Then 
$^p\!\psi_f( \F ), \ ^p\!\phi_f( \F )\in \Dbc (X_0)$ are 
also perverse sheaves on $X_0=f^{-1}(0)$. 
\end{theorem}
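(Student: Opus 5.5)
The plan is to prove perversity of $^p\!\psi_f(\F)$ first, by checking the two conditions (support and cosupport) that define a perverse sheaf. We then deduce the statement for $^p\!\phi_f(\F)$ from the distinguished triangle of Definition \ref{def.7.1.001}. By Theorem \ref{thm.7.1.004} both complexes are already constructible, so only the dimension estimates remain.

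\emph{Support condition for $^p\!\psi_f(\F)$.} Fix a Whitney stratification of $X$ adapted to $\F$ and to $X_0$, and refine it so that the cohomology sheaves of $\psi_f(\F)$ are locally constant on the strata of $X_0$. This is possible because $f$ satisfies Thom's $a_f$-condition after refinement. For $x$ in a stratum $S\subset X_0$, Theorem \ref{thm.7.2.006} identifies $H^j\psi_f(\F)_x$ with $H^j(F_x;\F)$. The Milnor fiber $F_x$ is a Stein space of complex dimension $n-1$ (more precisely, its intersection with each stratum $S'$ of $X\setminus X_0$ has dimension $\dim S'-1$). The restriction $\F|_{S'}$ has cohomology only in degrees $\le -\dim S'$. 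Hence Artin vanishing for constructible sheaves on Stein spaces, applied stratum by stratum through the standard spectral sequence, gives
$$H^j(F_x;\F)=0 \quad \text{for } j>-\dim S .$$
To obtain exactly this bound rather than a weaker one, one cuts by a normal slice to $S$ at $x$, so that the transversal Milnor fiber has dimension $n-\dim S-1$. After the shift by $[-1]$, this says $\dim\operatorname{supp} H^j(\,^p\!\psi_f\F)\le -j$, which is the support condition.

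\emph{Cosupport condition.} We use duality. The standard commutation $D\circ\psi_f\simeq\psi_f\circ D$ holds up to the shift, i.e.\ $D(\,^p\!\psi_f\F)\simeq\,^p\!\psi_f(D\F)$; this is in Kashiwara–Schapira Ch.~VIII and in Schürmann. Since $D\F$ is again perverse, the support estimate just proved applies to it. This yields the cosupport condition for $^p\!\psi_f(\F)$, so $^p\!\psi_f(\F)$ is perverse. I expect establishing (or citing precisely) this duality isomorphism to be the main obstacle. Proving it requires identifying $i^!$ with $i^{-1}$ composed with the variation triangle, or a local computation via Verdier duality on the Milnor tube. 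I would cite it rather than reprove it.

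\emph{Vanishing cycles.} For $\phi_f$, rotate the triangle of Definition \ref{def.7.1.001} to get
$$\,^p\!\psi_f(\F)\to\,^p\!\phi_f(\F)\to i^{-1}\F\overset{+1}{\to}.$$
Here $i^{-1}\F$ lies in ${}^pD^{\le 0}$, since restriction to a hypersurface is right $t$-exact up to the shift, and $^p\!\psi_f\F$ is perverse. So $^p\!\phi_f\F$ lies in ${}^pD^{\le 0}$. Applying the same argument to the dual triangle, which uses $i^!\F[1]\in{}^pD^{\ge 0}$ together with the analogous duality $D\,^p\!\phi_f\simeq\,^p\!\phi_f D$, gives ${}^pD^{\ge 0}$. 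Alternatively, one can check the support condition for $\phi_f$ directly from the stalk formula $\widetilde H^j(F_x;\F)$ and then invoke duality.
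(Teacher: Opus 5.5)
The paper states this result without proof; it is recalled as a standard fact. Your outline is the standard argument: right $t$-exactness of ${}^p\!\psi_f$ comes from Artin vanishing on the Stein Milnor fiber of a normal slice, left $t$-exactness from $D\circ{}^p\!\psi_f\simeq{}^p\!\psi_f\circ D$ (which you may cite), and $\phi_f$ is handled by extension arguments. The structure is sound, but two degree statements are wrong as written.

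\emph{The support bound is off by one.} Your displayed bound is $H^j(F_x;\F)=0$ for $j>-\dim S$. Since $H^j({}^p\!\psi_f\F)_x\simeq H^{j-1}(F_x;\F)$, the support condition at a point $x$ of a stratum $S$ with $\dim S=s$ requires $H^j(F_x;\F)=0$ for all $j>-s-1$, that is, already for $j=-s$. From your bound the shift only gives $\dim\operatorname{supp}H^j({}^p\!\psi_f\F)\le 1-j$. The slice argument does give the sharp bound, as follows.
\begin{itemize}
\item On a normal slice $N$ of dimension $n-s$, the complex $\F|_N[-s]$ is perverse.
\item The Milnor fiber $F^N_x$ of $f|_N$ is Stein and meets each stratum $S'$ in dimension at most $\dim S'-s-1$.
\item Hence $\F|_{F^N_x}[-s-1]$ satisfies the support condition on $F^N_x$.
\item Artin vanishing then gives $H^j(F_x;\F)\simeq H^j(F^N_x;\F)=0$ for $j>-s-1$.
\end{itemize}

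\emph{The statement $i^!\F[1]\in{}^pD^{\ge 0}$ is false in general.} For the perverse skyscraper $\F=\CC_{\{x\}}$ with $x\in X_0$, one has $i^!\F[1]=\CC_{\{x\}}[1]$. For a hypersurface, $i^!$ has perverse amplitude $[0,1]$. What you actually need is only $i^!\F\in{}^pD^{\ge 0}$, which holds for every closed embedding. It enters through the variation triangle $i^!\F\to{}^p\!\phi_f\F\to{}^p\!\psi_f\F\overset{+1}{\longrightarrow}$.

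This triangle follows from the octahedral axiom applied to the composition $i^{-1}\F\to i^{-1}{\rm R}j_*j^{-1}\F\to\psi_f\F$. The cones are $i^!\F[1]$, then $\psi_f\F$ (via the triangle with $T-{\rm id}$), and $\phi_f\F$ for the composite. So this route needs no duality statement for $\phi_f$.

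Conversely, if you do cite $D\circ{}^p\!\phi_f\simeq{}^p\!\phi_f\circ D$, the triangle is superfluous: apply the ${}^pD^{\le 0}$ statement to $D\F$ and dualize. Similarly, $i^{-1}\F\in{}^pD^{\le 0}$ holds for any closed embedding, so no hypersurface property or shift is involved there.
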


Next, we shall introduce some basic definitions 
and results on polynomial maps 
$f \colon X=\CC^n \longrightarrow \CC$ on $X= \CC^n_z$ ($n \geq 2$). 
After the fundamental papers Broughton 
\cite{Broughton}, Parusinski \cite{Paru} 
and Siersma-Tib{\u a}r \cite{S-T-1}, many authors 
studied the global behavior of polynomial maps 
$f \colon X= \CC^n \longrightarrow \CC$. 
First of all, for a 
polynomial map $f \colon X= \CC^n \longrightarrow \CC$, 
it is well-known that there 
exists a finite subset $B \subset \CC$ such that the restriction
\begin{equation}
\CC^n \setminus f^{-1}(B) \longrightarrow \CC \setminus B
\end{equation}
of $f$ is a $C^{\infty}$ locally trivial fibration. 
We denote by $B_f$ the smallest 
subset $B \subset \CC$ satisfying this 
condition. Let ${\rm Sing} f \subset X= \CC^n$ 
be the set of the critical points of 
$f \colon X= \CC^n \longrightarrow \CC$. Then by 
the definition of $B_f$, obviously we have 
$f( {\rm Sing} f) \subset B_f$. 
We call the elements of $B_f$ the 
bifurcation values of $f$. 
The determination of the bifurcation set 
$B_f \subset \CC$ is a fundamental problem and has been  
studied by many mathematicians from 
various points of view. 
For $r>0$ we set $B(0;r) := \{ z \in X= \CC^n \ | \ 
 ||z|| = \sqrt{ \sum_{i=1}^n |z_i|^2} <r \} \subset X= \CC^n$. 

\begin{definition} {\rm (Broughton \cite{Broughton})} \label{dfn:tame}
Let $\partial f\colon X= \CC^n \longrightarrow \CC^n$ 
be the map defined by $\partial f(z)=(\partial_1f(z), \ldots, \partial_n f(z))$ 
($z \in \CC^n$). 
Then we say that $f$ is tame at infinity if the restriction $(\partial f)^{-1}(B(0;\e )) 
\longrightarrow B(0;\e )$ of $\partial f$ to a sufficiently 
small ball $B(0;\e ) \subset \CC^n$ centered at the origin $0 \in \CC^n$ is proper. 
\end{definition}
Let $Y= \CC^n_w$ be the dual vector space of $X= \CC^n_z$ and 
for $z=(z_1, \ldots, z_n) \in X$ and 
$w=(w_1, \ldots, w_n) \in Y$ set 
\begin{equation}
\langle z, w \rangle := \sum_{i=1}^n z_i w_i \qquad \in \CC. 
\end{equation}
Let
\begin{equation}
X\overset{p}{\longleftarrow}X\times 
Y\overset{q}{\longrightarrow}Y
\end{equation}
be the projections. Then, the contangent 
bundle $T^*X$ of $X= \CC^n$ being naturally 
identified with $X \times Y$, we define a 
Lagrangian submanifold $\Lambda^f$ of 
$T^*X \simeq X \times Y$ by 
\begin{equation}
\Lambda^f := \{ (z,f(z)) \in T^*X \ | \ z \in X \} 
\quad \subset T^*X \simeq X \times Y. 
\end{equation}
By the condition $\dim \Lambda^f = \dim Y$ there 
exists a non-empty Zariski open subset $\Omega \subset Y$ 
of $Y= \CC^n_w$ such that the restriction 
\begin{equation}
q^{-1}( \Omega ) \cap \Lambda^f \longrightarrow 
\Omega 
\end{equation}
of the projection $q: X \times Y  \longrightarrow Y$ 
to $q^{-1}( \Omega ) \cap \Lambda^f \subset X \times Y$ 
is an unramified covering of finite degree 
(see e.g. \cite[page 43]{GM}). It follows that if the origin 
$0 \in Y= \CC^n$ of $Y= \CC^n$ is contained 
in $\Omega$ then 
$f \colon X= \CC^n \longrightarrow \CC$ is tame at 
infinity. If the origin 
$0 \in Y= \CC^n$ is not contained 
in $\Omega$ , by taking a point $w_0 \in \Omega 
\subset Y= \CC^n$ of $\Omega$ 
close to the origin $0 \in Y= \CC^n$ 
we define a new function 
$g \colon X= \CC^n \longrightarrow \CC$ by 
\begin{equation}
g(z):= f(z)- \langle z, w_0 \rangle \quad (z \in X= \CC^n). 
\end{equation}
Then it is tame at infinity. Namely, generic linear 
perturbations of $f \colon X= \CC^n \longrightarrow \CC$ 
are tame at infinity. 
In \cite{Broughton} Broughton showed that if 
$f \colon X= \CC^n \longrightarrow \CC$ is tame at infinity 
then we have $f( {\rm Sing} f) = B_f$. 
In the tame case, the following result is fundamental. 
Note that if $f \colon X= \CC^n \longrightarrow \CC$ is tame at 
infinity then ${\rm Sing} f \subset X= \CC^n$ is a 
finite subset. This in particular implies that for any 
point $c \in \CC$ its fiber $f^{-1}(c) \subset X= \CC^n$ 
of $f$ has only isolated singular points. We denote by 
$\mu \geq 0$ the sum of the Milnor numbers of $f$ 
at the points of ${\rm Sing} f$ and call it the total 
Milnor number of $f$. Moreover, for a point $c \in \CC$ 
we denote by $\mu_c \geq 0$ the sum of the Milnor numbers of $f$  
at the (isolated) singular points of $f^{-1}(c)$. 

\begin{theorem} {\rm (Broughton \cite{Broughton})} \label{tame}
Assume that $f$ is tame at infinity. 
Then for any point $c \in \CC$ its fiber $f^{-1}(c)$ of $f$ 
is homotopy equivalent to the bouquet of some 
$(n-1)$-dimensional spheres $S^{n-1}$: 
\begin{equation}
f^{-1}(c)  \sim  S^{n-1}\vee\cdots\vee S^{n-1}.
\end{equation}
Moreover the number of spheres $S^{n-1}$ in this bouquet 
decomposition is equal to $\mu - \mu_c \geq 0$. 
In particular, we have isomorphisms 
\begin{equation}
H^j(f^{-1}(c); \CC ) \simeq 
\begin{cases}
\ \CC & (j=0) \\
& \\
\ \CC^{\mu - \mu_c} & (j=n-1) \\
 & \\
\ 0 & (j \neq 0,n-1). 
\end{cases}
\end{equation}
\end{theorem}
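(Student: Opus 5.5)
The plan is to prove the cohomological statement with the sheaf machinery recalled above, and then to upgrade it to a homotopy equivalence by a Lefschetz/Morse-type argument. The core of the cohomological part is to show that $\mathrm{R}f_*\CC_X[n]$ behaves like a perverse sheaf on $\CC$ whose only vanishing cycles sit at the critical values.

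\emph{Step 1: behaviour at infinity.} Since $f$ is tame, $\mathrm{Sing} f$ is finite and $f(\mathrm{Sing} f)=B_f$. I would show that tameness forces $f$ to have no singularities at infinity. Concretely, take a compactification $\overline{f}\colon \overline{X}\to\CC$ of $f$ (the closure of the graph in $\mathbb{P}^n\times\CC$) and let $\iota\colon X\hookrightarrow\overline{X}$ be the inclusion. The claim is
\begin{equation}
\phi_{\overline{f}-c}(\mathrm{R}\iota_*\CC_X)_y=0 \qquad \text{for all } y\in \overline{X}\setminus X.
\end{equation}
The standard route is Broughton's argument. Tameness gives $\|\partial f(z)\|\geq \e$ for $\|z\|\gg 0$. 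This lower bound lets one integrate the vector field $\overline{\partial f}/\|\partial f\|^2$, which yields a trivialization of $f$ near infinity, uniformly for $c$ in a compact set. This is the step I expect to be the main obstacle, because the gradient bound has to be converted into a genuine transversality statement: the fibers $f^{-1}(c)$ must be transversal to large spheres $\|z\|=R$ (a Milnor-type condition at infinity).

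\emph{Step 2: a perverse sheaf on the line.} By Proposition \ref{prp.7.1.002} applied to the proper map $\overline{f}$, together with Step 1, the vanishing cycles of $\mathrm{R}f_*\CC_X$ at $c$ are concentrated at the finitely many points of $\mathrm{Sing} f\cap f^{-1}(c)$. At each such point, Theorems \ref{thm.7.2.004} and \ref{thm.7.2.003} show they live in degree $n-1$, with total rank $\mu_c$. Since $\mathrm{R}\iota_*\CC_X[n]$ is perverse, Theorem \ref{thm.7.2.007} makes the vanishing cycles perverse as well. Moreover $\mathrm{R}f_*=\mathrm{R}\overline{f}_*$ on $\mathrm{R}\iota_*$, and the map is properly controlled by Step 1, so $\mathrm{R}f_*\CC_X[n]$ has the familiar structure: its cohomology sheaves are locally constant on $\CC\setminus B_f$, and they change at a point of $B_f$ only by the local vanishing cycles there.

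\emph{Step 3: computing the cohomology of the fibers.} $\CC$ is contractible, so $H^j(\CC^n;\CC)=H^j(\CC;\mathrm{R}f_*\CC_X)$. I would first do the generic fiber $F$. Take a large disk $D\supset B_f$ and join each $c\in B_f$ to a base point by disjoint paths. The standard Mayer--Vietoris / Lefschetz argument (the fibration $X_D\to D$ with the attaching data of the vanishing cycles) shows that $X\simeq \CC^n$ is obtained from $F$ by attaching $\mu$ cells of dimension $n$. Since $\CC^n$ is contractible, $\widetilde H^j(F)$ vanishes for $j\neq n-1$ and $H^{n-1}(F)=\CC^{\mu}$. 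For a special fiber $f^{-1}(c)$, the same argument applied over a small disk around $c$ shows that $f^{-1}(c)$ is obtained from the generic fiber by collapsing the $\mu_c$ local Milnor spheres. Theorem \ref{thm.7.2.003} supplies the local models, and $\phi$ is concentrated in degree $n-1$, which gives the Betti numbers $\mu-\mu_c$.

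\emph{Step 4: homotopy type.} Each fiber is an affine variety of dimension $n-1$, hence a Stein manifold. By Andreotti--Frankel it therefore has the homotopy type of a CW complex of dimension $\leq n-1$. The cell-attaching description also shows that $F$ is $(n-2)$-connected: $\CC^n$ is contractible and is obtained from $F$ by attaching $n$-cells. By Hurewicz, a simply connected CW complex of dimension $n-1$ with free homology concentrated in degree $n-1$ is a bouquet of spheres; for $n\geq3$ this gives the result. The cases $n=2$, where fibers are curves, and special fibers, which are handled via deformation retraction onto the generic fiber with the Milnor cells collapsed, require separate but routine care.
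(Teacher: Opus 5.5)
The paper gives no proof of this statement: it is quoted from \cite{Broughton} and used as a black box. Your outline is essentially Broughton's own argument. Tameness gives no vanishing cycles at infinity, so $\CC^n$ is homotopy equivalent to the generic fibre $F$ with $\mu$ cells of dimension $n$ attached (by Milnor's local theory). This is followed by $(n-2)$-connectedness and Hurewicz--Whitehead. For the generic fibre the argument is sound. The transversality to large spheres that you flag in Step~1 is exactly the fact the paper itself invokes in the proof of Proposition~\ref{prp.gen-L-T} via \cite[Proposition 3.4]{Tak-4}. The perverse-sheaf framing of Step~2 is not accurate, since ${\rm R}f_*\CC_X[n]$ has the constant sheaf in degree $-n$, but you never actually use it.

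The genuine gap is the special fibre $f^{-1}(c)$, $c\in B_f$. Knowing that $\phi_{t-c}{\rm R}f_*\CC_X$ is concentrated in degree $n-1$ with rank $\mu_c$ only yields the exact sequence
\[
0\to H^{n-1}(f^{-1}(c);\CC)\to H^{n-1}(F;\CC)\simeq\CC^{\mu}\to\CC^{\mu_c}\to H^{n}(f^{-1}(c);\CC)\to 0 .
\]
This gives only $\dim H^{n-1}-\dim H^{n}=\mu-\mu_c$, the same kind of difference formula the paper's triangle argument produces in Proposition~\ref{prp.gen-L-T}. To conclude, you must show that $\CC^{\mu}\to\CC^{\mu_c}$ is onto, i.e.\ that the $\mu_c$ vanishing cycles at $c$ are independent in $H_{n-1}(F)$. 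Otherwise $H_n(f^{-1}(c))$ could be nonzero, and both the Betti number and the bouquet claim fail. Step~4 does not supply this: Andreotti--Frankel is a statement about Stein manifolds, and $f^{-1}(c)$ is singular.

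There are two fixes. The first is to use Hamm's theorem, or Artin vanishing for constructible sheaves on the affine variety $f^{-1}(c)$, to get $H^n(f^{-1}(c);\CC)=0$. The second is better, since it also works over $\ZZ$, which you need for the homotopy statement: use the global picture you already built. $H_n(X,F;\ZZ)$ is the direct sum of the local pieces $H_n(f^{-1}(D_{c_i}),F;\ZZ)$ transported along your chosen paths. Since $\CC^n$ is contractible, $\partial\colon H_n(X,F;\ZZ)\to H_{n-1}(F;\ZZ)$ is an isomorphism. By naturality, the boundary map of the pair $(f^{-1}(D_c),F)$ is then its restriction to a direct summand, hence injective with free cokernel of rank $\mu-\mu_c$.

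Next, the tube $f^{-1}(D_c)$ retracts onto $f^{-1}(c)$; this again needs Step~1. Note that it is this tube that retracts onto the special fibre, not anything onto the generic fibre as your last sentence suggests. Hence $f^{-1}(c)$ is homotopy equivalent to $F$ with $\mu_c$ cells of dimension $n$ attached. For $n\geq 3$ it is therefore simply connected, with integral reduced homology free of rank $\mu-\mu_c$ and concentrated in degree $n-1$, and Whitehead finishes.
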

By Parusinski \cite{Paru} and Siersma-Tib\u ar \cite{S-T-1} 
this fundamental result 
was later extended to polynomial maps 
$f \colon X= \CC^n \longrightarrow \CC$ 
with isolated singularities 
at infinity. 
In what follows, assume that $f: X = \CC^n \longrightarrow \CC$ 
is tame at infinity and set $\Sigma := f(  {\rm Sing } f) =B_f  
\subset \CC$. For $c \in \CC \setminus \Sigma$ we call 
$f^{-1}(c) \subset X= \CC$ a generic fiber of 
$f: X = \CC^n \longrightarrow \CC$. 
Then in \cite{L-T} Libgober and Tib{\u a}r obtained the following 
remarkable result. 

\begin{proposition}\label{prp.L-T}
{\rm (Libgober-Tib{\u a}r \cite[Proposition 3.1]{L-T})}  
Asuume that $c \in \CC \setminus \Sigma$. Then 
the hypersurface complement 
$X \setminus f^{-1}(c)$ has the homotopy type of 
\begin{equation}
S^1 \vee (S^{n} \vee \cdots \vee S^{n}), 
\end{equation}
where $S^{n} \vee \cdots \vee S^{n}$ is 
the bouquet of the $\mu$ copies of the $n$-sphere $S^{n}$.  
\end{proposition}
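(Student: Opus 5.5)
We need to prove that $X\setminus f^{-1}(c)$ is homotopy equivalent to $S^1\vee(S^n\vee\cdots\vee S^n)$ with $\mu$ spheres, for $c\notin\Sigma$ and $f$ tame at infinity. The plan is to realize $X$ from a tubular neighbourhood of the generic fiber by attaching one $n$-cell for each Morse critical point, and then to analyse the complement of the fiber inside that tube.

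First, replace $f$ by a small generic perturbation that is still tame, so that all critical points are Morse with distinct critical values. The perturbation is $f_s=f+s\ell$ with $\ell$ a generic linear form and $|s|$ small. Tameness is an open condition in this sense, since $\partial f_s=\partial f+s\,\partial\ell$ still satisfies the properness of Definition \ref{dfn:tame}. The total Milnor number $\mu$ is preserved, by conservation of Milnor numbers together with tameness, which prevents critical points from escaping to infinity. Choose $|s|$ small relative to the distance from $c$ to $\Sigma$. Then $f_s^{-1}(c)$ is isotopic to $f^{-1}(c)$ in $X$: the family $f_t^{-1}(c)$, $t\in[0,s]$, has no critical points and, by uniform tameness, no critical values at infinity near $c$. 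Hence the pairs $(X,f^{-1}(c))$ and $(X,f_s^{-1}(c))$ are homeomorphic, and we may assume from now on that $f$ has exactly $\mu$ Morse critical points with distinct critical values $c_1,\dots,c_\mu\neq c$.

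Second, use the fibration structure over $\CC$. By Broughton's result $B_f=\Sigma=\{c_1,\dots,c_\mu\}$, and tameness gives a Malgrange-type condition: $\|z\|\,\|\partial f(z)\|$ is bounded below near infinity. This lets us integrate lifted vector fields, so $f$ behaves like a proper map over compact subsets of $\CC$. Let $D\subset\CC$ be a large closed disc containing $\Sigma$ and $c$. Choose disjoint paths $\gamma_i$ in $D$ from $c$ to $c_i$, and let $K\subset D$ be a small closed disc around $c$ union the paths. The set $K$ is contractible and is a deformation retract of $\CC$. Lifting this retraction, $X$ deformation retracts onto $f^{-1}(K)$. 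Moreover, $f^{-1}(K)$ is obtained from $f^{-1}(\bar\Delta_c)$, where $\Delta_c$ is the small disc around $c$, by attaching one $n$-cell along each Lefschetz thimble. Meanwhile, $f^{-1}(\bar\Delta_c)\setminus f^{-1}(c)$ deformation retracts onto $f^{-1}(\partial\Delta_c)$, which is homotopy equivalent to $F\times S^1$ since the fibration over $\partial\Delta_c$ is trivial. Removing $f^{-1}(c)$, which does not meet the thimbles away from their boundaries, we get
\[
X\setminus f^{-1}(c)\;\simeq\;\bigl(F\times S^1\bigr)\cup e^n_1\cup\cdots\cup e^n_\mu ,
\]
where each cell is attached along a vanishing sphere $S^{n-1}\subset F\times\{1\}$.

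Third, identify this space. Since $X$ is contractible, $f^{-1}(\bar\Delta_c)\simeq F$ (taken as $F\times$ a disc) with the $\mu$ cells attached is contractible. Hence the attaching maps of the cells kill $F$: the space $F\cup_{\varphi_i} e^n_i$ is contractible, and this is consistent with $F\simeq\bigvee_\mu S^{n-1}$ by Theorem \ref{tame}, with the vanishing cycles forming a basis. Now write $F\times S^1=(F\times I)/\sim$, or better decompose $F\times S^1 = F\times\{1\}\cup (F\times S^1)$ via the cell structure $S^1=e^0\cup e^1$. Then
\[
(F\times S^1)\cup\textstyle\bigcup e^n_i=\bigl(F\cup\bigcup e^n_i\bigr)\cup (F\times e^1).
\]
Collapsing the contractible subcomplex $F\cup\bigcup e^n_i$ (which contains $F\times\{1\}$) yields the space $F\times e^1/(F\times\partial e^1)$. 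This is the unreduced suspension of $F$ with the two cone points identified, namely $\Sigma F\vee S^1$. Since $\Sigma F\simeq\bigvee_\mu S^n$, we obtain the claimed homotopy type.

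The main obstacle is the second step: justifying rigorously that the tame polynomial, which is not proper, admits the controlled vector-field lifting needed for the retraction of $X$ onto $f^{-1}(K)$ and for the thimble description. The non-vanishing of $\|z\|\,\|\partial f\|$ at infinity, deduced from Definition \ref{dfn:tame}, is what I would use to build the lifts, in the spirit of Broughton's proof. A secondary point is the invariance of $\mu$ and of the pair $(X,f^{-1}(c))$ under the Morsification, which again follows from uniform tameness of the family.
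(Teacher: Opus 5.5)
Your argument is correct, and it goes beyond what the paper does. The paper does not prove the homotopy statement at all. It quotes Libgober--Tib{\u a}r and only remarks that the cohomology groups can be recovered from Theorem \ref{tame}, the long exact sequence of ${\rm R}\Gamma_c(X;\CC_{X\setminus f^{-1}(c)})\to{\rm R}\Gamma_c(X;\CC_X)\to{\rm R}\Gamma_c(X;\CC_{f^{-1}(c)})\overset{+1}{\to}$, and Poincar\'e duality; there the contractibility of $X$ enters only through ${\rm R}\Gamma_c(X;\CC_X)\simeq\CC[-2n]$.

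You instead build cell models in which $X$ and $X\setminus f^{-1}(c)$ are $F\times\bar\Delta_c$ and $F\times\partial\Delta_c$ with the same $\mu$ thimbles attached. You then use contractibility of $X$ to see that $A=(F\times\{1\})\cup\bigcup_i e^n_i$ is contractible, and collapse $A$ to get $(F\times S^1)/(F\times\{1\})\simeq\Sigma F\vee S^1$. That collapse is the heart of the argument and it is sound. It is genuinely needed: the attaching maps represent a basis of $H_{n-1}(F)$, so they are not null-homotopic in $F\times S^1$ and the cells cannot simply be split off. Two small points. The spheres should be slid to $F\times\{1\}$ using one trivialization of $f^{-1}(\bar\Delta_c)$, so that $A$ is literally the same complex in both models; this is automatic since $\partial\Delta_c\subset\bar\Delta_c$. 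The Morsification could also be avoided by attaching cones over the local Milnor fibers $\simeq\bigvee_{\mu_q}S^{n-1}$.

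As for what each route buys: the paper's sheaf-theoretic route needs neither Morsification nor lifting of vector fields. It transfers verbatim to singular fibers (Proposition \ref{prp.gen-L-T}) and to the rational functions of Section \ref{section 3}, but it only yields Betti numbers. Your route yields the actual homotopy type, and via $H_*(A,F)$ it even reproves that $\widetilde{H}_*(F)$ is $\ZZ^\mu$ in degree $n-1$. But it rests on a global thimble decomposition controlled by tameness. That is exactly what is missing for $P/Q^m$ near the indeterminacy points, which is why the paper can only conjecture the analogous bouquet structure there (Conjectures \ref{conj-2} and \ref{conj-3}).
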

If $c \in \CC \setminus \Sigma$, this in particular implies that 
for the cohomology groups 
$H^j(X \setminus f^{-1}(c); \CC )$ ($j \in \ZZ$) we have isomorphisms 
\begin{equation}
H^j(X \setminus f^{-1}(c); \CC ) \simeq 
\begin{cases}
\ \CC & (j=0,1) \\
& \\
\ \CC^{\mu} & (j=n) \\
 & \\
\ 0 & (j \neq 0,1,n), 
\end{cases}
\end{equation}
We can easily recover this result on the cohomology groups 
by using Theorem \ref{tame}, the distinguished triangle 
\begin{equation}
{\rm R}\Gamma_c(X; \CC_{X \setminus f^{-1}(c)}) \longrightarrow 
{\rm R}\Gamma_c(X; \CC_X) \longrightarrow 
{\rm R}\Gamma_c(X; \CC_{f^{-1}(c)}) \overset{+1}{\longrightarrow}
\end{equation}
and the (generalized) Poincar\'e dualities for the complex 
manifolds $X \setminus f^{-1}(c)$ and $f^{-1}(c)$. 
For $c \in \Sigma$ we have the following result. 

\begin{proposition}\label{prp.gen-L-T} 
Asuume that $n \geq 3$ and $c \in \Sigma$. Then for the complement 
$X \setminus f^{-1}(c)$ of the singular fiber $f^{-1}(c)$ of 
$f$ we have 
\begin{equation}
H^j(X \setminus f^{-1}(c); \CC ) \simeq 0 \quad 
(j \not= 0,1,n-1,n) 
\end{equation}
and $H^0(X \setminus f^{-1}(c); \CC ) \simeq 
H^1(X \setminus f^{-1}(c); \CC ) \simeq \CC$. Moreover we have 
\begin{equation}
\dim H^n(X \setminus f^{-1}(c); \CC ) - 
\dim H^{n-1}(X \setminus f^{-1}(c); \CC )= 
\mu - \mu_c.
\end{equation}
\end{proposition}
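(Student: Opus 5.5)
We follow the same strategy as for the generic case: combine Theorem \ref{tame} with the distinguished triangle
\begin{equation*}
{\rm R}\Gamma_c(X; \CC_{X \setminus f^{-1}(c)}) \longrightarrow
{\rm R}\Gamma_c(X; \CC_X) \longrightarrow
{\rm R}\Gamma_c(X; \CC_{f^{-1}(c)}) \overset{+1}{\longrightarrow}
\end{equation*}
and Poincar\'e duality on the $n$-dimensional manifold $U:=X\setminus f^{-1}(c)$, which gives $H^j(U;\CC)\simeq [H^{2n-j}_c(U;\CC)]^*$. Since $H^k_c(X;\CC)$ is $\CC$ for $k=2n$ and $0$ otherwise, the long exact sequence gives $H^k_c(U)\simeq H^{k-1}_c(f^{-1}(c))$ for $k\neq 2n,2n-1$. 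At the top it gives
\begin{equation*}
0\to H^{2n-2}_c(f^{-1}(c))\to H^{2n-1}_c(U)\to 0\to H^{2n-1}_c(f^{-1}(c))\to H^{2n}_c(U)\to \CC\to H^{2n}_c(f^{-1}(c))=0 .
\end{equation*}
The remaining task is therefore to compute $H^\bullet_c(f^{-1}(c);\CC)$ for the singular fiber $Z:=f^{-1}(c)$, which is a hypersurface with finitely many isolated singular points and is no longer a manifold, so duality cannot be applied to it directly.

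To get $H^\bullet_c(Z)$, we use that $Z$ has pure complex dimension $n-1$ and is smooth off a finite set. The terms of top degree are as follows: $H^{2n-1}_c(Z)=0$, and $H^{2n-2}_c(Z)\simeq\CC^{r}$, where $r$ is the number of irreducible components of $Z$. The key input from Theorem \ref{tame} concerns the ordinary cohomology of $Z$. Because $f$ is tame, $Z$ is a deformation retract of $f^{-1}(D)$ for a small disc $D$ around $c$. Hence $H^\bullet(Z)\simeq H^\bullet(f^{-1}(D))$, and $f^{-1}(D)$ is obtained from a generic fiber by attaching $\mu_c$ cells of dimension $n$ (Milnor balls, Theorem \ref{thm.7.2.003}). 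This yields $H^j(Z)=0$ for $j\neq 0,n-1$, $H^0(Z)=\CC$, and $\chi(Z)=1+(-1)^{n-1}(\mu-\mu_c)$.

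To pass from $H^\bullet$ to $H^\bullet_c$, we use that the singular points are isolated. The complex $\CC_Z[n-1]$ is perverse, and its Verdier dual differs from it only by skyscraper terms at the singular points, namely the local link cohomology. For $n\ge3$ the link of an isolated hypersurface singularity of dimension $n-1\ge2$ is $(n-3)$-connected, so these corrections occur only in degrees near the middle, $n-1$ and $n$ in compactly supported cohomology. Consequently $H^j_c(Z)$ vanishes outside $\{n-1,n,2n-2\}$, and $H^{2n-2}_c(Z)=\CC$ because $Z$ is connected and, by that connectivity (which needs $n\ge3$), locally irreducible at its singular points. In addition, $\chi_c(Z)=\chi(Z)$, as holds for any complex algebraic variety.

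Feeding this into the long exact sequence and dualizing, we get $H^0(U)=\CC$ from $H^{2n}_c(U)=\CC$, and $H^1(U)\simeq H^{2n-1}_c(U)\simeq H^{2n-2}_c(Z)=\CC$. In the remaining degrees, $H^j(U)$ is nonzero only for $j$ with $2n-j-1\in\{n-1,n\}$, that is $j\in\{n-1,n\}$. The Euler characteristic identity $\chi(U)=\chi(X)-\chi(Z)$, with $\chi(X)=1$ and $\chi(Z)=1+(-1)^{n-1}(\mu-\mu_c)$, gives
\begin{equation*}
1-1+(-1)^{n-1}\dim H^{n-1}(U)+(-1)^n\dim H^n(U)=(-1)^n(\mu-\mu_c),
\end{equation*}
which is exactly $\dim H^n(U)-\dim H^{n-1}(U)=\mu-\mu_c$.

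The main obstacle is the vanishing step for $H^j_c(Z)$: showing that the failure of duality at the isolated singularities affects only the degrees $n-1,n$, and that local irreducibility holds. Both depend on the link connectivity for $n\ge3$, and I would verify them through the attaching-triangle for $Z\setminus{\rm Sing}\subset Z$.
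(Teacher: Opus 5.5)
Your argument is correct, but it reaches the compactly supported cohomology of the singular fiber $Z=f^{-1}(c)$ by a different route than the paper.

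\textbf{The paper's route.} It never applies Theorem \ref{tame} to $Z$ itself and never dualizes on the singular space. It writes the triangle $i_c^{-1}{\rm R}f_!\CC_X\to\psi_{t-c}{\rm R}f_!\CC_X\to\phi_{t-c}{\rm R}f_!\CC_X\overset{+1}{\longrightarrow}$. The first two terms are ${\rm R}\Gamma_c(Z)$ and ${\rm R}\Gamma_c$ of a nearby generic fiber. The latter is known from Theorem \ref{tame} and Poincar\'e duality on that smooth fiber: $\CC^{\mu}$ in degree $n-1$ and $\CC$ in degree $2n-2$. Tameness (transversality of large spheres plus the non-characteristic deformation lemma) lets one replace $X$ by a ball. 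The vanishing cycle term then becomes ${\rm R}\Gamma(Z;\phi_{f-c}\CC_{B(0;r_0)})$, supported at the isolated singular points, hence concentrated in degree $n-1$ with dimension $\mu_c$. The long exact sequence gives at once:
\begin{itemize}
\item $H^j_c(Z)=0$ for $j\neq n-1,n,2n-2$;
\item $H^{2n-2}_c(Z)\simeq\CC$, where $n\ge 3$ is used only to separate the degrees;
\item $\dim H^{n-1}_c(Z)-\dim H^n_c(Z)=\mu-\mu_c$.
\end{itemize}
No Euler characteristic or link argument is needed.

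\textbf{Your route.} You use the full strength of Theorem \ref{tame} for the special fiber and pay for it with local analysis. The stalks satisfy $H^{-k}(D\CC_Z)_p\simeq\tilde H_{k-1}(L_p)$. Milnor's $(n-3)$-connectivity and Poincar\'e duality on the links $L_p$ show that the cone of $\CC_Z[2n-2]\to D\CC_Z$ lives only in degrees $-(n-1)$ and $-n$. Connectedness of $L_p$ kills the correction in degree $-(2n-2)$; this is exactly your local irreducibility. The identity $\chi_c=\chi$ then gives the difference formula. This settles the step you flagged as the main obstacle.

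\textbf{Comparison.} Your version is more classically topological and makes transparent where $n\ge 3$ enters. The paper's version needs only generic-fiber information plus localized vanishing cycles. That is exactly the template reused for rational functions in Lemma \ref{vc-lem} and Theorem \ref{MainT-2}, where no Broughton-type statement for special fibers is available.

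\textbf{Two small points.}
\begin{itemize}
\item The isomorphism $H^k_c(U)\simeq H^{k-1}_c(Z)$ holds for $k\neq 2n,2n+1$, not $k\neq 2n,2n-1$. Your displayed top sequence handles $k=2n-1$ correctly anyway.
\item The cell-attaching sketch alone does not exclude $H^n(Z)\neq 0$. Simply cite Theorem \ref{tame}, which covers every $c\in\CC$.
\end{itemize}
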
 

\begin{proof}
Let $i_c: \{ c \} \hookrightarrow \CC$ be the inclusion map 
and consider the distinguished triangle 
\begin{equation}\label{d-t-1} 
i_c^{-1} {\rm R}f_! \CC_X \longrightarrow \psi_{t-c} {\rm R}f_! \CC_X 
\longrightarrow 
\phi_{t-c} {\rm R}f_! \CC_X \overset{+1}{\longrightarrow}. 
\end{equation}
Note that by Theorem \ref{thm.7.2.006} there exist isomorphisms 
\begin{align} 
i_c^{-1} {\rm R}f_! \CC_X & \simeq {\rm R}\Gamma_c 
(f^{-1}(c); \CC_{f^{-1}(c)}), \\ 
\psi_{t-c} {\rm R}f_! \CC_X & \simeq {\rm R}\Gamma_c 
(f^{-1}(c+ \eta ); \CC_{f^{-1}(c+ \eta)}) 
\quad (0< \eta \ll 1). 
\end{align}
Since for $0< \eta \ll 1$ the fiber $f^{-1}(c+ \eta )$ of $f$ is 
generic, by Theorem \ref{tame} we thus obtain 
isomorphisms 
\begin{equation}
H^j \psi_{t-c} {\rm R}f_! \CC_X  \simeq 
\begin{cases}
\ \CC & (j=2n-2) \\
& \\
\ \CC^{\mu} & (j=n-1) \\
 & \\
\ 0 & (j \neq n-1, 2n-2). 
\end{cases}
\end{equation}
By the tameness at infinity of $f$, 
there exist $r_0 \gg 0$ and a small disk 
$D_{\eta_0}(c) := \{ t \in \CC \ | \ 
 |t-c| < \eta_0 \} \subset \CC$ ($0< \eta_0 \ll 1$) 
in $\CC$ centered at the point $c \in \CC$ such that 
for any $r \geq r_0$ and $t \in D_{\eta_0}(c)$ 
the sphere $\partial B(0;r)$ intersects $f^{-1}(t)$ 
transversally (see e.g. \cite[Proposition 3.4]{Tak-4}). 
By the 
non-characteristic deformation lemma \cite[Proposition 2.7.2]{K-S}, 
this implies that we have isomorphisms 
\begin{align} 
i_c^{-1} {\rm R}f_! \CC_{B(0;r_0)} & \simeq 
i_c^{-1} {\rm R}f_! \CC_X, \\ 
\psi_{t-c} {\rm R}f_! \CC_{B(0;r_0)} & \simeq 
\psi_{t-c} {\rm R}f_! \CC_X
\end{align}
and hence the distinguished triangle 
\begin{equation}\label{d-t-2} 
i_c^{-1} {\rm R}f_! \CC_{B(0;r_0)} \longrightarrow 
\psi_{t-c} {\rm R}f_! \CC_{B(0;r_0)} \longrightarrow 
\phi_{t-c} {\rm R}f_! \CC_{B(0;r_0)} \overset{+1}{\longrightarrow}. 
\end{equation}
is isomorphic to the one \eqref{d-t-1}. Moreover, by 
Proposition \ref{prp.7.1.002}
there exists an isomorphism 
\begin{equation} 
\phi_{t-c} {\rm R}f_! \CC_{B(0;r_0)} \simeq 
{\rm R}\Gamma (f^{-1}(c); \phi_{f-c} \CC_{B(0;r_0)}).  
\end{equation}
We can easily see that the support of the vanishing 
cycle sheaf $\phi_{f-c} \CC_{B(0;r_0)}$ is contained 
in the finite set of the (isolated) singular points 
of the complex hypersurface $f^{-1}(c) \subset X= \CC^n$. 
Then we obtain a concentration 
\begin{equation}
H^j\phi_{t-c} {\rm R}f_! \CC_{B(0;r_0)} \simeq 0 
\quad (j \not= n-1) 
\end{equation}
and $\dim H^{n-1}\phi_{t-c} {\rm R}f_! \CC_{B(0;r_0)}
= \mu_c$. By the cohomology long exact sequence 
associated to the distinguished triangle \eqref{d-t-1}, 
we thus can show that 
\begin{equation}
H^j_c(f^{-1}(c); \CC ) \simeq 0 \quad (j \not= n-1, n, 2n-2), 
\quad H^{2n-2}_c(f^{-1}(c); \CC ) \simeq \CC
\end{equation}
and 
\begin{equation}
\dim H^{n-1}_c(f^{-1}(c); \CC ) - \dim H^{n}_c(f^{-1}(c); \CC )
= \mu - \mu_c. 
\end{equation}
Then the assertion immediately follows from the 
distinguished triangle 
\begin{equation}
{\rm R}\Gamma_c(X; \CC_{X \setminus f^{-1}(c)}) \longrightarrow {\rm R}\Gamma_c(X; \CC_X) 
\longrightarrow 
{\rm R}\Gamma_c(X; \CC_{f^{-1}(c)}) \overset{+1}{\longrightarrow}
\end{equation}
and the (generalized) Poincar\'e duality for the complex 
manifold $X \setminus f^{-1}(c)$. 
\qed
\end{proof} 

Finally, we recall the theory of Milnor fibers of 
meromorphic functions introduced by 
Gusein-Zade, Luengo and Melle-Hern\'andez in \cite{GZ-L-MH} 
and the nearby and vanishing cycle functors related to 
them in \cite{N-T}, \cite{Raibaut-2} and \cite{Tak-4}. 
Let $X$ be a connected complex manifold and $P(x), Q(x)$ 
holomorphic functions on it. Assume that $Q(x)$ 
is not identically zero on $X$. 
Then we define a meromorphic function $f(x)$ on 
$X$ by 
\begin{equation}
f(x)= \frac{P(x)}{Q(x)}  \qquad (x \in X \setminus Q^{-1}(0)). 
\end{equation}
Let us set $I(f)=P^{-1}(0) \cap Q^{-1}(0) \subset X$. 
If $P$ and $Q$ are coprime in the local ring 
$\O_{X,x}$ at a point $x \in X$, then 
$I(f)$ is nothing but the set of the indeterminacy 
points of $f$ on a neighborhood of $x$. In fact, 
the set $I(f)$ depends on the pair $(P(x), Q(x))$ of 
holomorphic functions representing $f(x)$. 
For example, if we take a holomorphic function 
$R(x)$ on $X$ (which is not identically zero on $X$) and set 
\begin{equation}
g(x)= \frac{P(x)R(x)}{Q(x)R(x)}  \qquad (x \in X), 
\end{equation}
then the set $I(g)=I(f) \cup R^{-1}(0)$ might be 
bigger than $I(f)$. In this way, we 
distinguish $f(x)= \frac{P(x)}{Q(x)}$ from 
$g(x)= \frac{P(x)R(x)}{Q(x)R(x)}$ even if 
their values coincide over an open dense 
subset of $X$. 
This is the convention due to 
Gusein-Zade, Luengo and Melle-Hern\'andez 
\cite{GZ-L-MH} etc. 
Then we have the following 
fundamental theorem due to \cite{GZ-L-MH}. 

\begin{theorem}\label{the-fib} 
{\rm (Gusein-Zade, Luengo and Melle-Hern\'andez 
\cite[Section 1]{GZ-L-MH})}  
For any point $x \in P^{-1}(0)$ there exists 
$\e_0> 0$ such that for any $0< \e \leq \e_0$ 
and the open ball $B(x; \e ) \subset X$ of 
radius $\e >0$ centered at $x$ 
(in a local chart of $X$) the restriction 
\begin{equation}
B(x; \e ) \setminus Q^{-1}(0) 
\longrightarrow \CC
\end{equation}
of $f: X \setminus Q^{-1}(0) 
\longrightarrow \CC$ is a locally trivial 
fibration over a sufficiently small 
punctured disk in $\CC$ centered at 
the origin $0 \in \CC$ 
\end{theorem}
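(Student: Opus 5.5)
The plan follows the strategy of the Gusein-Zade--Luengo--Melle-Hern\'andez argument: resolve the indeterminacy, turn $f$ into a genuine holomorphic map near the exceptional set, and then apply Milnor-type transversality arguments.

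First I will reduce to a holomorphic map on a modification. Take a proper holomorphic map $\pi\colon \widetilde{X}\longrightarrow X$, obtained by successive blow-ups along centers over $I(f)=P^{-1}(0)\cap Q^{-1}(0)$, which is an isomorphism over $X\setminus I(f)$ near $x$. I choose it so that on $\widetilde{X}$ the meromorphic function $f\circ\pi$ extends to a holomorphic map $\widetilde{f}\colon \widetilde{X}\longrightarrow \mathbb{P}^1$; this is Hironaka's resolution of indeterminacy. Moreover I choose it so that the total transform of $P^{-1}(0)\cup Q^{-1}(0)$ is a normal crossing divisor. Then $B(x;\e)\setminus Q^{-1}(0)$ is identified with $\pi^{-1}(B(x;\e))\setminus \widetilde{Q}^{-1}(0)$, where $\widetilde{Q}=Q\circ\pi$. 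Over a small punctured disk $D^*$ around $0$, the fibers of $f$ there are exactly the fibers of $\widetilde{f}$ intersected with this set, because the points where $\widetilde{Q}=0$ but $\widetilde{f}\neq\infty$ lie over $I(f)$ and are excluded.

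Next I will stratify and choose the radius. I take a Whitney stratification $\mathcal{S}$ of $\widetilde{X}$ adapted to $\widetilde{Q}^{-1}(0)$ and to $\widetilde{f}^{-1}(0)$, and such that $\widetilde{f}$ satisfies Thom's $a_{\widetilde{f}}$ condition along $\widetilde{f}^{-1}(0)$. Such a stratification exists for holomorphic maps to a curve (Hironaka, Brian\c{c}on--Maisonobe--Merle). I also need the stratification to refine the pull-back of the distance function $\rho=\|\cdot - x\|^2\circ\pi$. By the curve selection lemma, there is $\e_0>0$ such that for $0<\e\le\e_0$ the sphere $\rho=\e^2$ is transverse to all strata of $\widetilde{f}^{-1}(0)\cap\pi^{-1}(\overline{B(x;\e_0)})$. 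This is the standard Milnor argument, applied to the real analytic function $\rho$ restricted to strata.

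I then pass from transversality on the zero fiber to nearby fibers. Using the $a_{\widetilde{f}}$ condition, transversality to $\widetilde{f}^{-1}(0)$ propagates to the fibers $\widetilde{f}^{-1}(t)$ for $0<|t|\le\eta(\e)$, inside a compact neighborhood of $\pi^{-1}(\partial B(x;\e))\cap \widetilde{f}^{-1}(0)$. Away from $\widetilde{f}^{-1}(0)$, compactness of $\pi^{-1}(\overline{B(x;\e)})$ guarantees that $\widetilde{f}$ has no stratified critical values in a small punctured disk, by constructibility of the critical value set. Thus the map
\[
\widetilde{f}\colon \pi^{-1}(\overline{B(x;\e)})\cap \widetilde{f}^{-1}(D^*)\longrightarrow D^*
\]
is a proper stratified submersion, stratified by the boundary and the strata of $\widetilde{Q}^{-1}(0)$. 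Thom's first isotopy lemma then makes it a stratified locally trivial fibration. Removing the closed union of strata in $\widetilde{Q}^{-1}(0)$ and the boundary sphere gives the local triviality of $B(x;\e)\setminus Q^{-1}(0)\longrightarrow D^*$.

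The main obstacle is the uniformity step: one $\eta$ must work for the whole range $0<\e\le\e_0$, and the $a_{\widetilde{f}}$ condition must hold along the strata inside $\widetilde{Q}^{-1}(0)\cap \widetilde{f}^{-1}(0)$, namely the exceptional components where $\widetilde{f}$ vanishes. I would first try to obtain the $a_{\widetilde{f}}$ condition by putting $\widetilde{f}$ in normal crossing monomial form, $\widetilde{f}=u\,y^{a}$ locally, where it holds directly. If needed, I would treat the family in $\e$ by checking transversality for $\e=\e_0$ and for all smaller $\e$ simultaneously via the curve selection lemma, applied to the set of pairs (point, $\e$) where transversality fails.
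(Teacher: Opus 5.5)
The paper gives no proof of this statement; it is quoted from \cite{GZ-L-MH}, so there is no internal argument to compare with. Your outline is the standard Milnor--L\^e argument carried over to a resolution of indeterminacy, and it does prove the theorem. Its steps are: transversality of $\rho^{-1}(\e^2)$ to the strata of $\widetilde{f}^{-1}(0)$ for all $0<\e\le\e_0$, by curve selection at points of the compact set $\pi^{-1}(x)$; propagation to $\widetilde{f}^{-1}(t)$ via $a_{\widetilde f}$ and compactness of $\pi^{-1}(\partial B(x;\e))$; absence of stratified critical values in $D^*_\eta$; and Thom's first isotopy lemma for the proper map $\pi^{-1}(\overline{B(x;\e)})\cap\widetilde{f}^{-1}(D^*_\eta)\to D^*_\eta$. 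Two points need correcting.

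\emph{Your conditions on $\pi$ are incompatible in general.} Suppose $P^{-1}(0)$ is singular off $Q^{-1}(0)$, say $Q=1$ and $x$ an isolated singular point of $P^{-1}(0)$. Then a $\pi$ that is an isomorphism over $X\setminus I(f)$ is an isomorphism near $x$, so the total transform of $P^{-1}(0)\cup Q^{-1}(0)$ is not normal crossing there.
\begin{itemize}
\item Your main line never uses normal crossings, since Hironaka's $a_f$-stratification theorem does not need them, so you can simply drop that requirement.
\item If you want the monomial shortcut, require instead that $\pi$ be an isomorphism over $X\setminus(P^{-1}(0)\cup Q^{-1}(0))$. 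For $t\neq 0$ the fiber $f^{-1}(t)$ misses $P^{-1}(0)$, so the fibers of $f$ over $D^*_\eta$ are still identified with those of $\widetilde{f}$ on $\pi^{-1}(B(x;\e))\setminus\pi^{-1}(P^{-1}(0)\cup Q^{-1}(0))$.
\item Near $\widetilde{f}^{-1}(0)$ one then has $\widetilde{f}=u\,y^{c}$ with $c\ge 0$, $c\neq 0$. The normal crossing stratification is Whitney, satisfies $a_{\widetilde f}$, and $\widetilde{f}$ is a submersion on its strata not contained in $\widetilde{f}^{-1}(0)$.
\item This also settles the critical-value step, which you justify only vaguely by ``constructibility''. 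The correct general reason is curve selection at points of $\widetilde{f}^{-1}(0)$: there $\widetilde{f}$ is $\CC$-valued and constant along analytic curves in the stratified critical locus.
\end{itemize}

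\emph{The ``main obstacle'' you name is not one.} The statement lets the punctured disk depend on $\e$. A single $\eta$ for all $0<\e\le\e_0$ cannot exist in general: for $Q=1$, $P=z_1$, $x=0$, the fiber over $t$ in $B(0;\e)$ is empty once $|t|\geq\e$. The only uniformity needed is the transversality for all $0<\e\le\e_0$, which your curve-selection step already provides; $\eta=\eta(\e)$ is then obtained separately for each fixed $\e$. Likewise, $a_{\widetilde f}$ along the strata of $\widetilde{Q}^{-1}(0)\cap\widetilde{f}^{-1}(0)$ needs no separate treatment. It is part of Hironaka's theorem applied to the holomorphic function $\widetilde{f}$ on $\widetilde{X}\setminus\widetilde{f}^{-1}(\infty)$, with $\widetilde{Q}^{-1}(0)$ and $\widetilde{f}^{-1}(0)$ required to be unions of strata.
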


We call the fiber in this theorem the Milnor fiber of 
the meromorphic function $f(x)= \frac{P(x)}{Q(x)}$ 
at $x \in P^{-1}(0)$ and denote it by $F_x$. 
Then we can extend the classical 
notions of nearby and vanishing cycle functors to those for 
meromorphic functions as follows 
(for the details, see \cite{N-T}, 
\cite{Raibaut-2} and \cite{Tak-4}). For the function $f= \frac{P}{Q}: 
X \setminus Q^{-1}(0) \longrightarrow \CC$ 
we consider the (not necessarily) closed embedding 
\begin{equation}
i_f: X \setminus Q^{-1}(0) \hookrightarrow 
X \times \CC_t \qquad (x \longmapsto (x,f(x))).  
\end{equation}
Let $t: X \times \CC \rightarrow \CC$ be the 
second projection. 

\begin{definition}
{\rm (Nguyen-Takeuchi \cite{N-T})} For 
$\F \in \Db(X)$ we set 
\begin{equation}
\psi_f^{\mer}( \F ):= \psi_t( {\rm R} i_{f*}
( \F |_{X \setminus Q^{-1}(0)}) ) 
\qquad \in \Db(X_0). 
\end{equation}
We call $\psi_f^{\mer}( \F )$ the meromorphic 
nearby cycle complex of $\F$ along $f$. 
\end{definition} 
We thus obtain a functor 
\begin{equation}
\psi_f^{\mer}(  \cdot  ) : 
\Db(X) \longrightarrow \Db(X). 
\end{equation}
We call it the meromorphic 
nearby cycle functor along $f$. 
As in the case where $f$ is holomorphic i.e. 
$f=\frac{P}{1}$, we have the following results. 

\begin{lemma}\label{lem-nc-1} 
{\rm (see \cite[Lemma 2.1]{N-T})} 
\begin{enumerate}
\item[\rm{(i)}] The support of $\psi_f^{\mer}( \F )$ is 
contained in $P^{-1}(0)$. 
\item[\rm{(ii)}] There exists an isomorphism 
\begin{equation}
\psi_f^{\mer}( \F ) \simto 
\psi_f^{\mer}( {\rm R} 
\Gamma_{X \setminus (P^{-1}(0) \cup Q^{-1}(0))} 
( \F )). 
\end{equation}
\end{enumerate}
\end{lemma}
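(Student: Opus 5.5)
Both assertions reduce to the fact that nearby cycles are local near the zero fiber and only see the open set where $t\neq 0$. We work with the definition $\psi_f^{\mer}(\F)=\psi_t({\rm R}i_{f*}(\F|_{X\setminus Q^{-1}(0)}))$. Here $\psi_t$ is computed on $X\times\{0\}\simeq X$, which is why the result lives on $X$. From Definition~\ref{def.7.1.001} we will use two facts. First, $\psi_t(\G)$ depends only on the restriction of $\G$ to $\{t\neq 0\}$. Second, $\psi_t(\G)$ is supported in the closure of ${\rm supp}\,\G\cap\{t\neq0\}$ meeting $\{t=0\}$.

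For (i), set $\G={\rm R}i_{f*}(\F|_{X\setminus Q^{-1}(0)})$. Its support lies in the closure $\overline{\Gamma}$ of the graph $\Gamma=\{(x,f(x))\mid x\in X\setminus Q^{-1}(0)\}$ in $X\times\CC$. Take $x\notin P^{-1}(0)$. Near $x$ we have $|P|\geq\delta>0$, and $|Q|$ is bounded on a compact neighbourhood $K$ of $x$. Hence $|f|=|P|/|Q|\geq \delta'>0$ on $K\setminus Q^{-1}(0)$. So the open set $K^\circ\times\{|t|<\delta'\}$ does not meet $\Gamma$, and therefore does not meet $\overline\Gamma$. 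Thus $\G$ vanishes on a neighbourhood of $(x,0)$, and so does $(j\circ\pi)^{-1}\G$ on the corresponding part of $\{t\neq0\}$. Since ${\rm R}(j\circ\pi)_*$ computes stalks at $(x,0)$ via small neighbourhoods, $\psi_t(\G)_{(x,0)}=0$. This proves (i).

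For (ii), use the natural morphism ${\rm R}\Gamma_{U}(\F)\to\F$ with $U:=X\setminus(P^{-1}(0)\cup Q^{-1}(0))$. It induces a morphism $\psi_f^{\mer}({\rm R}\Gamma_U\F)\to\psi_f^{\mer}(\F)$, and we need its inverse. It is enough to show that the two ${\rm R}i_{f*}$ agree on $\{t\neq0\}$, since $\psi_t$ only involves $(j\circ\pi)^{-1}$, which factors through restriction to $\{t\neq 0\}$. Now $i_f^{-1}(\{t\neq0\})=\{f\neq0\}\cap(X\setminus Q^{-1}(0))=U$, which is open. Base change for the open embedding $\{t\neq0\}\hookrightarrow X\times\CC$ gives
\[({\rm R}i_{f*}\G')|_{\{t\neq0\}}\simeq {\rm R}(i_f|_U)_*(\G'|_U).\]
Moreover $({\rm R}\Gamma_U\F)|_U=\F|_U$, because ${\rm R}\Gamma_U={\rm R}j_{U*}j_U^{-1}$ restricts to the identity on $U$. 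Hence the restrictions to $\{t\neq0\}$ agree, compatibly with the natural morphism. Applying $\psi_t$ gives the isomorphism in (ii).

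The only delicate point is the claim that $\psi_t$ depends only on the data over $\{t\neq0\}$. This holds because ${\rm R}(j\circ\pi)_*(j\circ\pi)^{-1}$ restricts first to the complement of $t=0$. A second point is that $i_f$ need not be closed, so ${\rm R}i_{f*}$ is not extension by zero. The argument therefore relies on base change for open subsets, which holds for ${\rm R}i_{f*}$ with no properness assumption.
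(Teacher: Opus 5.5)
Your argument is correct. This paper does not prove the lemma; it simply cites \cite[Lemma 2.1]{N-T}. Your direct route is the natural one:
\begin{itemize}
\item for (i), the closure of the graph of $f$ meets $\{t=0\}$ only over $P^{-1}(0)$, because $|f|\geq \delta'>0$ near any point off $P^{-1}(0)$;
\item for (ii), $\psi_t$ depends only on the restriction to $\{t\neq 0\}$, and $i_f^{-1}(\{t\neq 0\})=X\setminus(P^{-1}(0)\cup Q^{-1}(0))$, so open base change gives the isomorphism.
\end{itemize}

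One correction to (ii). For an open set $U$, the natural morphism is $\F\to{\rm R}\Gamma_U(\F)={\rm R}j_{U*}j_U^{-1}\F$, the adjunction unit. There is no natural morphism ${\rm R}\Gamma_U(\F)\to\F$; that direction exists for $\F_U$ and for ${\rm R}\Gamma_Z$ with $Z$ closed. The correct morphism already goes in the direction of the arrow in the statement, so you do not need an inverse. Since it restricts to an isomorphism on $U$, the rest of your argument applies verbatim.
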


\begin{theorem}\label{the-nc-1} 
{\rm (see \cite[Lemma 2.1 and Theorem 2.2]{N-T})} 
Assume that $\F \in \Db(X)$ is constructible. Then we have 
\begin{enumerate}
\item[\rm{(i)}] 
$\psi_f^{\mer}( \F ) \in \Db(X)$ is also constructible. 
\item[\rm{(ii)}] If moreover $\F \in \Db(X)$ is perverse, then 
$\psi_f^{\mer}( \F )[-1] \in \Db(X)$ is also perverse. 
\item[\rm{(iii)}] For any point $x \in P^{-1}(0)$ 
there exist isomorphisms 
\begin{equation}
H^j \psi_f^{\mer}( \F )_x \simeq H^j (F_x; \F ) 
 \qquad (j \in \ZZ ). 
\end{equation}
Moreover these isomorphisms are 
compatible with the monodromy automorphisms 
on the both sides. 
\end{enumerate}
\end{theorem}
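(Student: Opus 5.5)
The strategy is to reduce every assertion to the classical holomorphic case applied to the second projection $t\colon X\times\CC_t\to\CC$ and the complex $\G:={\rm R}i_{f*}(\F|_{X\setminus Q^{-1}(0)})$.

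\emph{Step 1: constructibility and perversity of $\G$.} The map $i_f$ is the composition of the closed embedding of the graph $\Gamma_f\subset (X\setminus Q^{-1}(0))\times\CC$ with the open inclusion $(X\setminus Q^{-1}(0))\times\CC\hookrightarrow X\times\CC$. The first factor is a closed embedding, so pushforward preserves constructibility and perversity. For the second factor, note that its complement $Q^{-1}(0)\times\CC$ is a hypersurface in $X\times\CC$, hence the inclusion is a Stein (affine) morphism. Therefore ${\rm R}j_*$ preserves constructibility, since the stratification is analytic. It also preserves perversity, because for the complement of a hypersurface ${\rm R}j_*$ is $t$-exact for the perverse t-structure by Artin vanishing. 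It follows that $\G\in\Dbc(X\times\CC)$, and $\G$ is perverse whenever $\F$ is; here the dimension shift of $\F|_{X\setminus Q^{-1}(0)}$ under the graph embedding is harmless, since the graph is isomorphic to $X\setminus Q^{-1}(0)$.

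\emph{Step 2: conclude (i) and (ii).} Identify $X$ with $X\times\{0\}=t^{-1}(0)$. By Theorem \ref{thm.7.1.004}, $\psi_t(\G)$ is constructible, which gives (i). By Theorem \ref{thm.7.2.007}, $\psi_t(\G)[-1]$ is perverse if $\G$ is, which gives (ii). The main care needed is the perverse shift convention: one must check that the shift from the graph embedding is consistent with the convention that $\CC_X[n]$ is perverse.

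\emph{Step 3: prove (iii), the hardest step.} By Theorem \ref{thm.7.2.006} applied on $X\times\CC$ at the point $(x,0)$, we have
\[
H^j\psi_t(\G)_{(x,0)}\simeq H^j\bigl(B((x,0);\e)\cap t^{-1}(\eta);\G\bigr)
\]
for $0<|\eta|\ll\e$, where $B((x,0);\e)$ is a small ball or polydisc. By construction of $\G$, this equals
\[
H^j\bigl(\{y\in B(x;\e)\setminus Q^{-1}(0) : |f(y)|<\e,\ f(y)=\eta\};\F\bigr).
\]
The obstacle is to replace the polydisc neighbourhood $B(x;\e)\times D_\e$ by the product-type neighbourhood used in Theorem \ref{the-fib}. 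In that region the fiber $f^{-1}(\eta)\cap B(x;\e)$ automatically satisfies $|f|=|\eta|<\e$, so the constraint on $|t|$ is vacuous. Hence the fiber is exactly the Milnor fiber $F_x$ of Theorem \ref{the-fib}. One must still justify that the Milnor fiber of $t$ at $(x,0)$, relative to a constructible sheaf, may be computed using polydiscs $B(x;\e)\times D_{\e'}$ with $\e'\ll\e$ rather than round balls. I would handle this by the standard cofinality argument for neighbourhood systems of $(x,0)$ together with the non-characteristic deformation lemma \cite[Proposition 2.7.2]{K-S}, using the transversality of $\partial B(x;\e)$ to the fibers of $f$ that is implicit in Theorem \ref{the-fib}.

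\emph{Step 4: monodromy.} The isomorphisms in Step 3 are induced by the same geometric identifications, and the monodromy on each side is given by the loop $\eta\mapsto e^{2\pi i\theta}\eta$. Hence the isomorphisms are compatible with the monodromy automorphisms, exactly as in Theorem \ref{thm.7.2.004}.
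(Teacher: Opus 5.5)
Your proposal is correct and follows essentially the standard argument of \cite{N-T}, which the paper cites without reproducing. For (i) and (ii), ${\rm R}i_{f*}$ preserves constructibility and perversity because $i_f$ factors as the closed graph embedding $X\setminus Q^{-1}(0)\hookrightarrow (X\setminus Q^{-1}(0))\times\CC$ followed by the open inclusion of this hypersurface complement into $X\times\CC$; (i) and (ii) then follow from the classical results for $\psi_t$, and your perverse shift bookkeeping is right. For (iii), the stalk at $(x,0)$ is computed on small product neighbourhoods $B(x;\e)\times D_\delta$ using the Gusein-Zade--Luengo--Melle-Hern\'andez fibration.

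The only step that is not literally ``by construction of $\G$'' is the restriction of $\G$ to the slice $t=\eta$ near $I(f)\times\{\eta\}$, which is a base change for the non-proper map $i_f$. You can avoid it entirely by computing $H^j\psi_t(\G)_{(x,0)}$ directly from the definition on the open sets $B(x;\e)\times D^*_\delta$, pulled back to the universal cover. There the cohomology of $\G$ is tautologically the cohomology of $\F$ on the preimage under $i_f$, and this is what your cofinality argument does anyway.
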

By this theorem we obtain a functor 
\begin{equation}
\psi_f^{\mer}(  \cdot ) : 
\Dbc (X) \longrightarrow \Dbc (X). 
\end{equation}

For the meromorphic function $f(x)= \frac{P(x)}{Q(x)}$ 
and $\F \in \Db(X)$ we set also 
\begin{equation}
\psi_f^{\merc}( \F ):= \psi_t( i_{f!}
( \F |_{X \setminus Q^{-1}(0)}) ) 
\qquad \in \Db(X). 
\end{equation}
In fact, this object was originally defined by 
Raibaut in \cite{Raibaut-2}. 
Here we call it the meromorphic 
nearby cycle complex 
with compact support of $\F$ along $f$. 
Then we obtain a functor 
\begin{equation}
\psi_f^{\merc}( \cdot ) : 
\Db(X) \longrightarrow \Db(X)
\end{equation}
which satisfies the properties similar to 
the ones in  Lemma \ref{lem-nc-1} and 
Theorem \ref{the-nc-1} (i), (ii) 
as follows (see \cite[Section 2]{Tak-3}). 
However the isomorphism in Theorem \ref{the-nc-1} (iii) 
does not hold for $\psi_f^{\merc}( \F )$. 
This implies that the natural morphism 
\begin{equation}
\psi_f^{\merc}( \F ) \longrightarrow 
\psi_f^{\mer}( \F )
\end{equation}
is not an isomorphism in general. 

\begin{lemma}\label{lem-1} 
{\rm (see \cite[Lemma 2.2]{Tak-4})}
\begin{enumerate}
\item[\rm{(i)}] The support of $\psi_f^{\merc}( \F )$ is 
contained in $P^{-1}(0)$. 
\item[\rm{(ii)}] There exists an isomorphism 
\begin{equation}
\psi_f^{\merc}( \F ) \simto 
\psi_f^{\merc}( \F_{X \setminus (P^{-1}(0) \cup Q^{-1}(0))} ). 
\end{equation}
\end{enumerate}
\end{lemma}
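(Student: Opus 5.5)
The plan is to work directly from the definition $\psi_f^{\merc}( \F ) = \psi_t( i_{f!}( \F |_{X \setminus Q^{-1}(0)}) )$ and to use two standard properties of the classical nearby cycle functor $\psi_t$ along $t: X \times \CC \to \CC$. First, $\psi_t$ is a triangulated functor. Second, $\psi_t(\G)$ depends only on the restriction of $\G$ to $X \times \CC^*$, since the definition passes through $(j\circ\pi)^{-1}$. Together with the stalk description of $\psi_t$, these two facts give both statements.

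For (i), set $\G := i_{f!}( \F |_{X \setminus Q^{-1}(0)})$; it is supported on the graph $\{ (x,f(x)) \}$ of $f$. Take a point $x \in X$ with $x \notin P^{-1}(0)$, and show that $\G$ vanishes on a neighborhood $U \times D_\delta$ of $(x,0)$; then $\psi_t(\G)_x = 0$ by the stalk formula. There are two cases.
\begin{enumerate}
\item[(a)] If $Q(x) \neq 0$, then $f$ is holomorphic near $x$ with $f(x) \neq 0$. So $|f| \geq |f(x)|/2$ on a small neighborhood $U$, and the graph misses $U \times D_\delta$ for $\delta < |f(x)|/2$.
\item[(b)] If $Q(x) = 0$, then $P(x) \neq 0$. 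Choose $U$ so that $|P| \geq c > 0$ and $|Q| \leq \e$ on $U$. Then on $U \setminus Q^{-1}(0)$ we have $|f| \geq c/\e$, so again the graph misses $U \times D_\delta$ for $\delta < c/\e$.
\end{enumerate}
This case (b), the behaviour at poles of $f$, is the only point where the meromorphic situation differs from the holomorphic one, and the estimate above handles it.

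For (ii), put $Z := P^{-1}(0) \setminus Q^{-1}(0)$, which is closed in $X \setminus Q^{-1}(0)$. Restrict the distinguished triangle
\begin{equation*}
\F_{X \setminus (P^{-1}(0) \cup Q^{-1}(0))} \longrightarrow \F_{X \setminus Q^{-1}(0)} \longrightarrow \F_{P^{-1}(0) \setminus Q^{-1}(0)} \overset{+1}{\longrightarrow}
\end{equation*}
to $X \setminus Q^{-1}(0)$; the middle term restricts to $\F|_{X \setminus Q^{-1}(0)}$. Then apply the exact functor $i_{f!}$ followed by $\psi_t$. The third term becomes $i_{f!}$ of a complex supported on $Z$, and its support lies in $i_f(Z) \subset X \times \{0\}$ because $f = 0$ on $Z$. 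Its restriction to $X \times \CC^*$ is therefore zero, so its nearby cycles vanish. The remaining morphism is thus an isomorphism
\begin{equation*}
\psi_f^{\merc}( \F_{X \setminus (P^{-1}(0) \cup Q^{-1}(0))} ) \simto \psi_f^{\merc}( \F ),
\end{equation*}
which is (ii).

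I expect no serious obstacle. The points needing care are case (b) of (i), checking that the graph of $f$ stays away from $t=0$ near the poles, and the bookkeeping that restriction to $X \setminus Q^{-1}(0)$ commutes with the extensions by zero $\F_{(\cdot)}$.
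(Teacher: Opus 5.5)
Your proof is correct. In (i) you show that the graph of $f$, off which $i_{f!}(\F|_{X\setminus Q^{-1}(0)})$ has zero stalks, misses a neighborhood of $(x,0)$ for every $x\notin P^{-1}(0)$, poles included. In (ii) your triangle argument amounts to the observation that $\psi_t$ depends only on the restriction to $X\times\CC^*$, while $i_f^{-1}(X\times\CC^*)=X\setminus(P^{-1}(0)\cup Q^{-1}(0))$; your natural isomorphism runs from $\psi_f^{\merc}(\F_{X\setminus(P^{-1}(0)\cup Q^{-1}(0))})$ to $\psi_f^{\merc}(\F)$, opposite to the displayed arrow, which is harmless since it is an isomorphism.

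The paper gives no proof of its own here and simply cites \cite[Lemma 2.2]{Tak-4}, so your direct verification supplies the routine check left to that reference.
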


\begin{theorem}\label{the-1} 
{\rm (see \cite[Lemma 2.2 and Theorem 2.4]{Tak-4})}
Assume that $\F \in \Db(X)$ is constructible. Then we have 
\begin{enumerate}
\item[\rm{(i)}] 
$\psi_f^{\merc}( \F ) \in \Db(X)$ is also constructible. 
\item[\rm{(ii)}] If moreover $\F \in \Db(X)$ is perverse, then 
$\psi_f^{\merc}( \F )[-1] \in \Db(X)$ is also perverse. 
\item[\rm{(iii)}] 
For any point $x \in P^{-1}(0)$ 
there exist isomorphisms 
\begin{equation}
H^j \psi_f^{\merc}( \F )_{x} \simeq 
H^j_c ( \overline{F_{x}} \setminus Q^{-1}(0) ; \F )
 \qquad (j \in \ZZ ),  
\end{equation}
where $H^j_c ( \cdot )$ stands for the hypercohomology 
group with compact support. 
\end{enumerate}
\end{theorem}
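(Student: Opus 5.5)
The plan is to reduce all three assertions to the classical nearby cycle functor $\psi_t$ on $X \times \CC_t$, applied to the single object $\G := i_{f!}(\F|_{X \setminus Q^{-1}(0)})$. We identify $X \times \{0\}$ with $X$.

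First factor the embedding as
\begin{equation}
i_f = j \circ \iota, \qquad \iota\colon X \setminus Q^{-1}(0) \hookrightarrow (X \setminus Q^{-1}(0)) \times \CC_t, \qquad j\colon (X \setminus Q^{-1}(0)) \times \CC_t \hookrightarrow X \times \CC_t .
\end{equation}
Here $\iota$ is the closed embedding of the graph $\{ tQ(x) - P(x) = 0 \} \cap \{ Q \neq 0 \}$, and $j$ is the open embedding of the complement of the hypersurface $\{ Q(x) = 0 \} \subset X \times \CC_t$. Both the graph and the removed set are complex analytic. Hence, if $\F$ is constructible, so are $\iota_* (\F|_{X \setminus Q^{-1}(0)})$ and $\G = j_! \iota_* (\F|_{X \setminus Q^{-1}(0)})$. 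Theorem \ref{thm.7.1.004} then gives (i).

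For the support claim, the graph meets $X \times \{0\}$ only along $P^{-1}(0) \setminus Q^{-1}(0)$. Its closure is contained in $\{ tQ = P \}$, which meets $t = 0$ inside $P^{-1}(0)$. So $\psi_t(\G)$ is supported in $P^{-1}(0)$.

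For (ii), suppose $\F$ is perverse; then so is $\F|_{X \setminus Q^{-1}(0)}$. The closed pushforward $\iota_*$ preserves perversity. The open embedding $j$ is the complement of a hypersurface and is therefore an affine morphism, so $j_!$ is t-exact for the perverse t-structure (Artin vanishing). Thus $\G$ is perverse on $X \times \CC_t$, and Theorem \ref{thm.7.2.007} shows that $\psi_t(\G)[-1] = \psi_f^{\merc}(\F)[-1]$ is perverse.

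For (iii), apply Theorem \ref{thm.7.2.006} to $\G$ at the point $(x,0)$. This gives
\begin{equation}
H^j \psi_f^{\merc}(\F)_x \simeq H^j\bigl( (B(x;\e) \times D) \cap \{ t = \eta \} ; \G \bigr)
\end{equation}
for a Milnor ball $B(x;\e)$, a small disk $D$ around $0$, and $0 < |\eta| \ll \e$. On the slice $t = \eta$, the object $\G$ is the extension by zero of $\F$ from the locally closed set $F_x = B(x;\e) \cap f^{-1}(\eta)$, regarded as a subset of $B(x;\e)$.

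The key step is to replace the open ball $B(x;\e)$ by the closed ball $\overline{B(x;\e)}$ without changing the cohomology. Once this is done, the slice is compact, so cohomology coincides with cohomology with compact support. The extension by zero from the set $\overline{B(x;\e)} \cap \{ tQ = P,\ t = \eta \} \setminus Q^{-1}(0) = \overline{F_x} \setminus Q^{-1}(0)$ then yields $H^j_c(\overline{F_x} \setminus Q^{-1}(0); \F)$.

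I expect this open-to-closed replacement to be the main obstacle. I would justify it by the non-characteristic deformation lemma \cite[Proposition 2.7.2]{K-S}, applied to the family of balls $B(x;r)$ for $\e \leq r \leq \e'$. What is needed is that $\partial B(x;r)$ is transversal to a Whitney stratification of $\{ tQ = P \}$ adapted to $\{ Q = 0 \}$ and to $\F$, on the slices $t = \eta$ with $|\eta|$ small. This should follow from the curve selection lemma argument underlying Theorem \ref{the-fib}, carried out on the graph closure $\{ tQ = P \}$ rather than on $X \setminus Q^{-1}(0)$. The delicate part is controlling the points of $\overline{F_x} \cap Q^{-1}(0)$ lying on the sphere: the stratification must include $\{ Q = 0 \}$, and transversality must be checked there too, so that the sections of the extension by zero do not change as $r$ varies.
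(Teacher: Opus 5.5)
Your proof is correct; the paper gives no argument of its own here (it only cites \cite[Lemma 2.2 and Theorem 2.4]{Tak-4}), and your route is the natural argument behind that citation: factor $i_{f!}$ as a closed pushforward followed by extension by zero across the hypersurface $Q^{-1}(0)\times\CC_t$, then compute the stalk of $\psi_t$ at $(x,0)$ on the slice $\{t=\eta\}$, where extension by zero commutes with restriction. This is the compact-support analogue of the proof of \cite[Theorem 2.2]{N-T}.

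Two small remarks. For (i), it is cleaner to write $\G\simeq(p^{-1}\F)_W$, with $p\colon X\times\CC_t\to X$ the projection and $W=\{tQ=P\}\setminus(Q^{-1}(0)\times\CC_t)$ locally closed analytic, because extension by zero of an arbitrary constructible sheaf across an analytic set need not be constructible. Your open-to-closed ball step is not a new obstacle: once $\G$ is constructible, the closed-ball form of Theorem \ref{thm.7.2.006} applies to $\G$ on $X\times\CC_t$ directly, and that form is proved by exactly the $a_t$-stratification and transversality argument you sketch.
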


By this theorem we obtain a functor 
\begin{equation}
\psi_f^{\merc }( \cdot ) : 
\Dbc (X) \longrightarrow \Dbc (X). 
\end{equation}
Let $i_0: X \hookrightarrow X \times \CC_{\tau}$ 
($x \longmapsto (x,0)$) be the inclusion map 
and for $\F \in \Db(X)$ set 
\begin{equation}
\phi_f^{\merc}( \F ):= \phi_{t}( i_{f!}
( \F |_{X \setminus Q^{-1}(0)}) ) 
\in \Db(X). 
\end{equation}
We call it the meromorphic 
vanishing cycle sheaf with compact support 
of $\F$ along $f$. Then there exists a 
distinguished triangle 
\begin{equation}\label{dist}
i_0^{-1} i_{f!}( \F |_{X \setminus Q^{-1}(0)}) 
\longrightarrow \psi_f^{\merc}( \F )
\longrightarrow \phi_f^{\merc}( \F ) 
\overset{+1}{\longrightarrow}
\end{equation}
in $\Db(X)$. Moreover, since for the inclusion map 
$j: P^{-1}(0) \setminus  Q^{-1}(0) 
\hookrightarrow X$ we have a Cartesian diagram 
\begin{equation}
\begin{CD}
P^{-1}(0) \setminus  Q^{-1}(0)   @>{j}>> X 
\\
@VVV    @VV{i_0}V
\\
X \setminus  Q^{-1}(0)   @>{i_f}>>  
X \times \CC_t
\end{CD}
\end{equation}
of inclusion maps, we obtain an isomorphism 
\begin{equation}\label{dist-isom}
i_0^{-1} i_{f!}( \F |_{X \setminus Q^{-1}(0)}) 
\simeq 
j_! ( \F |_{P^{-1}(0) \setminus  Q^{-1}(0) } ) 
\simeq \F_{P^{-1}(0) \setminus  Q^{-1}(0)}. 
\end{equation}
We then see that also the functor 
\begin{equation}
\phi_f^{\merc }( \cdot ) [-1] : \Db (X) \longrightarrow \Db (X)
\end{equation}
preserves the constructibility and the perversity. 
Moreover by \eqref{dist} and \eqref{dist-isom},  
for any $\F \in \Db(X)$ we have isomorphisms 
\begin{equation}
\phi_f^{\merc}( \F )_{x} \simeq 
\psi_f^{\merc}( \F )_{x} \qquad 
(x \in Q^{-1}(0)). 
\end{equation}
We also have the following result. 

\begin{lemma}\label{lem-2} 
{\rm (see \cite[Lemma 2.5]{Tak-4})}
\begin{enumerate}
\item[\rm{(i)}] The support of $\phi_f^{\merc}( \F )$ is 
contained in $P^{-1}(0)$. 
\item[\rm{(ii)}] There exists an isomorphism 
\begin{equation}
\phi_f^{\merc}( \F ) \simto 
\phi_f^{\merc}( \F_{X \setminus Q^{-1}(0)} ). 
\end{equation}
\end{enumerate}
\end{lemma}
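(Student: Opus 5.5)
Both assertions follow directly from the definition of $\phi_f^{\merc}$, the distinguished triangle \eqref{dist}, the isomorphism \eqref{dist-isom} and Lemma \ref{lem-1}. I take (ii) first, since it is essentially formal. Set $U := X \setminus Q^{-1}(0)$ and write $\F_U = j_{U!}j_U^{-1}\F$ for the open embedding $j_U \colon U \hookrightarrow X$. By definition $\phi_f^{\merc}(\F) = \phi_t( i_{f!}(\F|_U))$, so $\phi_f^{\merc}(\F)$ depends only on the restriction $\F|_U$. The natural morphism $\F_U \longrightarrow \F$ restricts on $U$ to the canonical isomorphism $(\F_U)|_U \simeq \F|_U$, because $j_U^{-1}j_{U!} \simeq {\rm id}$. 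Applying the functor $\phi_t \circ i_{f!}$ to this isomorphism gives the isomorphism
\begin{equation*}
\phi_f^{\merc}( \F_{X \setminus Q^{-1}(0)} ) \simto \phi_f^{\merc}( \F ),
\end{equation*}
whose inverse is the isomorphism in (ii). By the same reasoning, (ii) holds equally for $\psi_f^{\merc}$ and for the first term of \eqref{dist}, and these isomorphisms are compatible with the triangle \eqref{dist}.

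For (i), I will use the distinguished triangle \eqref{dist} together with \eqref{dist-isom}. These identify the first term with $\F_{P^{-1}(0) \setminus Q^{-1}(0)}$, which is supported in $\overline{P^{-1}(0) \setminus Q^{-1}(0)} \subset P^{-1}(0)$, because $P^{-1}(0)$ is closed. By Lemma \ref{lem-1} (i), the middle term $\psi_f^{\merc}(\F)$ is also supported in $P^{-1}(0)$. Now take any point $x \in X \setminus P^{-1}(0)$. The stalks at $x$ of the first two terms of \eqref{dist} vanish, so the long exact sequence of stalk cohomology at $x$ forces $H^j \phi_f^{\merc}(\F)_x \simeq 0$ for all $j$. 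Hence ${\rm supp}\, \phi_f^{\merc}(\F) \subset P^{-1}(0)$, which proves (i).

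I do not expect a genuine obstacle here. The only point needing care is in (ii): one must check that the morphism $\F_U \to \F$ really induces an isomorphism after $i_{f!}(\,\cdot\,|_U)$. This holds because $i_{f!}$ is applied only after restricting to $U$, where the two objects agree. Consequently no properness or constructibility assumptions on $\F$ are needed.
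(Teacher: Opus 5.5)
Your proof is correct. Part (ii) is purely formal, because $\phi_f^{\merc}(\F)=\phi_t(i_{f!}(\F|_{X\setminus Q^{-1}(0)}))$ depends only on $\F|_{X\setminus Q^{-1}(0)}$; part (i) follows from the stalkwise long exact sequence of \eqref{dist}, with the first term identified via \eqref{dist-isom} and the middle term controlled by Lemma \ref{lem-1} (i). The paper gives no proof of its own here (it only cites \cite[Lemma 2.5]{Tak-4}), but your argument is exactly the one its surrounding text sets up, so your route is essentially the intended one.
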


\begin{example}\label{exam-a1} 
Let $X= \CC^n_z$ ($n \geq 2$) and for the two polynomials $P(z)=z_1, Q(z)
=z_2 \in \CC [z_1, \ldots, z_n]$ on it and $m \geq 1$ set 
\begin{equation}
f(z) := \frac{P(z)}{Q(z)^m} = \frac{z_1}{z_2^m} 
\quad (z \in X \setminus Q^{-1}(0)). 
\end{equation}
Then for a point $z \in X \setminus Q^{-1}(0)$ and 
$c \in \CC$ the condition $f(z)=c$ is equivalent to 
the one $z_1-cz_2^m=0$. By Theorem \ref{the-1} (iii), 
the distinguished triangle \eqref{dist} and 
\eqref{dist-isom}, this implies that for any 
point $z \in P^{-1}(0)= \{ z_1=0 \}$ we have 
$\phi_f^{\merc}( \CC_X )_z \simeq 0$.  
\end{example}
For some examples of $f: X \setminus Q^{-1}(0) \longrightarrow 
\CC$ such that $\phi_f^{\merc}( \CC_X ) \not\simeq 0$ and 
a method to calculate their stalks, 
see \cite[Section 2]{Tak-4}.

\section{Main theorems}\label{section 3}
In this section, for two polynomials $P(z), Q(z) \in 
\CC [z_1, \ldots, z_n]$ ($n \geq 2$) coprime each other 
on $X= \CC_z^n$ and $m \geq 1$, we consider a rational function 
\begin{equation}
f := \frac{P}{Q^m}: X \setminus Q^{-1}(0)
 \longrightarrow \CC \quad (z \longmapsto \frac{P(z)}{Q(z)^m}).
\end{equation}
Then we define the tameness at infinity of $f$ as follows. 

\begin{definition}\label{dfn:tame-rat}
Let $\partial f\colon X \setminus Q^{-1}(0) \longrightarrow \CC^n$ 
be the map defined by $\partial f(z)=(\partial_1f(z), \ldots, \partial_n f(z))$ 
($z \in X \setminus Q^{-1}(0)$). 
Then we say that $f$ is tame at infinity if there exist 
$0< \e \ll 1$ and $r \gg 0$ such that 
$(\partial f)^{-1}(B(0;\e )) \subset B(0;r) \setminus Q^{-1}(0)$. 
\end{definition}
As in the case of polynomial functions on $X= \CC^n$ we can 
show that the generic linear perturbations of $f$ are tame at infinity. 
In what follows, assume that $Q: X = \CC^n \longrightarrow \CC$ and 
$f: X \setminus Q^{-1}(0) \longrightarrow \CC$ are tame at 
infinity. This implies that 
$f^{-1}(0)=P^{-1}(0) \setminus Q^{-1}(0) \subset 
X \setminus Q^{-1}(0)$ and $Q^{-1}(0) \subset X= \CC^n$ have 
only isolated singular points. Assume also 
that $P^{-1}(0) \subset X= \CC^n$ has only 
isolated singular points and intersects $Q^{-1}(0)$ 
transversally outside the set $\{ p_1,p_2, \ldots, p_k \} 
\subset P^{-1}(0) \cap Q^{-1}(0)$ of the (isolated) 
singular points $p_1,p_2, \ldots, p_k$ of $P^{-1}(0)$ or $Q^{-1}(0)$ 
lying on $P^{-1}(0) \cap Q^{-1}(0)$. We set $U := X \setminus Q^{-1}(0)$.

\begin{lemma}\label{vc-lem} 
In the situation as above, for any $c \in \CC$ we have a vanishing 
\begin{equation}
H^j \phi_{t-c} {\rm R} f_! \CC_U \simeq 0 \quad (j \not= n-1).  
\end{equation}
\end{lemma}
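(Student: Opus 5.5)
We will imitate the proof of Proposition \ref{prp.gen-L-T}, replacing the polynomial map by the graph embedding $i_f\colon U \hookrightarrow X \times \CC_t$. Since ${\rm R}f_! \CC_U \simeq {\rm R}t_! \, i_{f!} \CC_U$, the first step is to localize the vanishing cycles of ${\rm R}f_!\CC_U$ at $t=c$ to a compact region. Tameness at infinity of $f$ and of $Q$ gives $r_0 \gg 0$ and $0<\eta_0 \ll 1$ such that for $r \geq r_0$ and $|t-c|<\eta_0$ the sphere $\partial B(0;r)$ meets $f^{-1}(t)$ transversally (by a curve selection argument as in \cite[Proposition 3.4]{Tak-4}, using $(\partial f)^{-1}(B(0;\e)) \subset B(0;r)$). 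By the non-characteristic deformation lemma \cite[Proposition 2.7.2]{K-S} we then replace $\CC_U$ by $\CC_{U \cap B(0;r_0)}$ without changing $i_c^{-1}$, $\psi_{t-c}$ and hence $\phi_{t-c}$ of ${\rm R}f_!$.

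Next, $t$ is proper on the support of $i_{f!}\CC_{U\cap B(0;r_0)}$ over a neighborhood of $c$ (taking the closure in $\overline{B(0;r_0)} \times \CC$), so Proposition \ref{prp.7.1.002} gives
\begin{equation*}
\phi_{t-c}{\rm R}f_!\CC_{U\cap B(0;r_0)} \simeq {\rm R}\Gamma\bigl(X; \phi^{\merc}_{f-c}(\CC_{B(0;r_0)})\bigr),
\end{equation*}
where $f-c = (P-cQ^m)/Q^m$. It therefore suffices to show that $\phi^{\merc}_{f-c}(\CC_X)$ is supported on a finite set; then, since $\phi^{\merc}_{f-c}(\CC_X[n])[-1]$ is perverse, its stalks are concentrated in degree $n-1$ and the global sections are the direct sum of stalks, giving the vanishing for $j\neq n-1$.

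For finiteness of the support: on $U$, $\phi^{\merc}_{f-c}$ agrees with the usual $\phi_{f-c}(\CC_U)$, which is supported on the critical points of $f$ on $f^{-1}(c)$; these are finite by tameness (${\rm Sing} f$ is a closed algebraic set contained in a ball, hence finite). On $Q^{-1}(0)$, the support lies in $P^{-1}(0)\cap Q^{-1}(0)$ by Lemma \ref{lem-2}(i) (as $(P-cQ^m)^{-1}(0) \cap Q^{-1}(0) = P^{-1}(0)\cap Q^{-1}(0)$). At a point $x \in P^{-1}(0)\cap Q^{-1}(0)$ other than $p_1,\ldots,p_k$, $P$ and $Q$ are part of a local coordinate system, so locally $f - c = (z_1 - c z_2^m)/z_2^m$ up to an analytic coordinate change, and Example \ref{exam-a1} gives $\phi^{\merc}_{f-c}(\CC_X)_x \simeq 0$. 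Hence the support is contained in the finite set ${\rm Sing}f \cup \{p_1,\ldots,p_k\}$.

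The main obstacle is the first step: justifying the non-characteristic deformation near $Q^{-1}(0)$ at infinity, i.e. that the family $f^{-1}(t)$, $|t-c|<\eta_0$, together with its closure meeting $Q^{-1}(0)$, is transversal to large spheres; this is where tameness of $Q$ (not just of $f$) must be used, and I would try to handle it by applying the curve selection lemma to the closures of $\{P - tQ^m = 0\}$ and $Q^{-1}(0)$ in $X\times\CC$ simultaneously.
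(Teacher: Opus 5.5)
Your proposal is correct and follows the paper's proof essentially step by step. You cut off to $U\cap B(0;r_0)$ using transversality and the non-characteristic deformation lemma, pass to ${\rm R}\Gamma(X;\phi^{\merc}_{f-c}\CC_{B(0;r_0)})$ via Proposition \ref{prp.7.1.002}, show finite support using Example \ref{exam-a1} at the transversal points of $P^{-1}(0)\cap Q^{-1}(0)$, and conclude by perversity; your support analysis is more explicit than the paper's. The paper handles the obstacle you flag as you suggest, by the curve selection lemma at infinity \cite[Lemma 2]{N-Z-2}, which makes large spheres transversal to $P^{-1}(0)\cap Q^{-1}(0)$; combined with the local model of Example \ref{exam-a1}, this makes the deformation non-characteristic there too, and tameness of $Q$ is not needed for this step.
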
 

\begin{proof}
By the tameness at infinity of $f$, 
there exist $r_0 \gg 0$ and a small disk 
$D_{\eta_0}(c) := \{ t \in \CC \ | \ 
 |t-c| < \eta_0 \} \subset \CC$ ($0< \eta_0 \ll 1$) 
in $\CC$ centered at the point $c \in \CC$ such that 
for any $r \geq r_0$ and $t \in D_{\eta_0}(c)$ 
the sphere $\partial B(0;r)$ intersects $f^{-1}(t)$ 
transversally (see e.g. \cite[Proposition 3.4]{Tak-4}). 
Moreover, by the curve selection lemma at infinity 
in \cite[Lemma 2]{N-Z-2} we may assume also that 
for any $r \geq r_0$ 
the sphere $\partial B(0;r)$ intersects 
$P^{-1}(0) \cap Q^{-1}(0)$ transversally. 
Then by Example \ref{exam-a1} and 
the non-characteristic deformation lemma \cite[Proposition 2.7.2]{K-S}, 
this implies that we have isomorphisms 
\begin{align} 
i_c^{-1} {\rm R}f_! \CC_{U \cap B(0;r_0)} & \simeq 
i_c^{-1} {\rm R}f_! \CC_X, \\ 
\psi_{t-c} {\rm R}f_! \CC_{U \cap B(0;r_0)} & \simeq 
\psi_{t-c} {\rm R}f_! \CC_X. 
\end{align}
We thus obtain an isomorphism 
\begin{equation} 
 \phi_{t-c} {\rm R} f_! \CC_{U \cap B(0;r_0)}  \simeq 
\phi_{t-c} {\rm R} f_! \CC_U. 
\end{equation}
Let $q_1, \ldots, q_m \in f^{-1}(c)$ be the 
(isolated) singular points of $f^{-1}(c) \subset 
U=X \setminus Q^{-1}(0)$ and $\rho : X \times \CC 
\longrightarrow \CC$ the projection. Then for 
the (not necessarily) closed embedding 
\begin{equation}
i_f: U=X \setminus Q^{-1}(0) \hookrightarrow 
X \times \CC_t \qquad (x \longmapsto (x,f(x))).  
\end{equation}
we have $f= \rho \circ i_f$ and hence 
\begin{equation}
{\rm R} f_! \CC_{U \cap B(0;r_0)}  
\simeq 
{\rm R} \rho_* i_{f!} \CC_{U \cap B(0;r_0)}, 
\end{equation}
where we used the fact that $\rho$ is proper 
on the support of $i_{f!} \CC_{U \cap B(0;r_0)}$. 
By Proposition \ref{prp.7.1.002} and the definition of 
the meromorphic 
vanishing cycle functor $\phi_{f-c}^{\merc} ( \cdot )$ 
with compact support, we thus obtain an isomorphism 
\begin{equation} 
\phi_{t-c} {\rm R}f_! \CC_{U \cap B(0;r_0)} \simeq 
{\rm R}\Gamma (X ; \phi_{f-c}^{\merc} \CC_{B(0;r_0)}).  
\end{equation}
Moreover, by Example \ref{exam-a1} for $r_0 \gg 0$ 
we see that the 
support of $\phi_{f-c}^{\merc} \CC_{B(0;r_0)}$ is 
contained in the finite set 
\begin{equation} 
\{ p_1, \ldots, p_k \} \sqcup \{ q_1, \ldots, q_m \}
\quad \subset B(0;r_0) \subset X= \CC^n. 
\end{equation}
Then by the perversity of $\phi_{f-c}^{\merc} \CC_{B(0;r_0)}[n-1] 
\simeq \phi_{f-c}^{\merc} \CC_{X}[n-1]$ 
(see Theorem \ref{the-1} (ii)) we obtain the desired concentration 
\begin{equation}
H^j \phi_{t-c} {\rm R} f_! \CC_U \simeq 0 \quad (j \not= n-1).  
\end{equation}
\qed
\end{proof} 

Let $\Sigma \subset \CC$ be the union of $f(  {\rm Sing } f)$ and  the singular support of 
${\rm R} f_! \CC_U$.  For $c \in \CC \setminus \Sigma$ we call 
$f^{-1}(c) \subset U= X \setminus Q^{-1}(0)$ a generic fiber of 
$f: U= X \setminus Q^{-1}(0) \longrightarrow \CC$. 
Then by applying the arguments in the proof of 
\cite[Lemma 8.5]{Sabbah-2} to $f: U=X \setminus Q^{-1}(0) \longrightarrow \CC$ 
we obtain the following result. 

\begin{theorem}\label{MainT-1}
In the situation as above, assume also that $n \geq 3$. Then 
for any generic fiber $f^{-1}(c) \subset U=X \setminus Q^{-1}(0)$ 
($c \in \CC \setminus \Sigma$) of $f: U=X \setminus Q^{-1}(0) \longrightarrow \CC$ 
we have 
\begin{equation}
H^j(f^{-1}(c); \CC ) \simeq 0 \quad (j \not= 0,1,n-1) 
\end{equation}
and $H^0(f^{-1}(c); \CC ) \simeq H^1(f^{-1}(c); \CC ) \simeq \CC$. 
\end{theorem}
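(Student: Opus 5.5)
Following the argument of \cite[Lemma 8.5]{Sabbah-2}, I would work with the constructible complex $\G := {\rm R} f_! \CC_U \in \Dbc(\CC)$ and deduce its shape from the vanishing cycle concentration of Lemma \ref{vc-lem}. By Poincar\'e duality on the $(n-1)$-dimensional complex manifold $f^{-1}(c)$, the claim is equivalent to
\begin{equation*}
H^j_c(f^{-1}(c);\CC) \simeq 0 \quad (j \neq n-1, 2n-3, 2n-2), \qquad H^{2n-3}_c \simeq H^{2n-2}_c \simeq \CC .
\end{equation*}
For $c \notin \Sigma$ the stalk $\G_c$ computes ${\rm R}\Gamma_c(f^{-1}(c);\CC)$, and $\G$ is locally constant on $\CC\setminus\Sigma$. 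So the task is to control the cohomology sheaves of $\G$ on $\CC\setminus\Sigma$.

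The first step is a global computation. We have ${\rm R}\Gamma_c(\CC;\G) \simeq {\rm R}\Gamma_c(U;\CC)$, which is dual to $H^{2n-\ast}(U;\CC)$. Since $Q$ is tame at infinity, Proposition \ref{prp.gen-L-T} and Proposition \ref{prp.L-T}, applied to $Q$ at the value $0$, control $H^\ast(U;\CC)$: it is $\CC$ in degrees $0$ and $1$, and otherwise lives only in degrees $n-1$ and $n$. Hence $H^j_c(U;\CC)$ is $\CC$ for $j = 2n, 2n-1$ and otherwise vanishes outside $\{n, n+1\}$.

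The second step uses perversity. Since $\CC_U[n]$ is perverse and $f$ is affine-like, I expect $\G[n-1]$ to be perverse on $\CC$, or more modestly I can study its cohomology sheaves directly. Write $\L^j := H^j(\G)|_{\CC\setminus\Sigma}$, a local system. Lemma \ref{vc-lem} gives $H^j\phi_{t-c}\G = 0$ for $j \neq n-1$ at every $c$. Thus for $j \neq n-1$ there are no jumps at the points of $\Sigma$: the morphism $H^j(\G)_c \to H^j\psi_{t-c}\G$ is an isomorphism apart from the degrees near $n-1$. This should force $H^j(\G)$ for $j \neq n-1, n-2$ to be a local system on all of $\CC$, hence constant.

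Then ${\rm R}\Gamma_c(\CC; H^j\G) \simeq H^j(\G)_c[-2]$, and comparing with step one via the hypercohomology spectral sequence gives:
\begin{itemize}
\item $H^{2n-2}(\G) \simeq \CC$ (matching $H^{2n}_c(U)$);
\item $H^{2n-3}(\G) \simeq \CC$ (matching $H^{2n-1}_c(U)$);
\item all other $H^j(\G)$ vanish for $j \neq n-1$.
\end{itemize}
Here the hypothesis $n \geq 3$ keeps these degrees separated from the middle degrees $n-1$, $n$ and $n+1$, so that the spectral sequence does not mix them. This yields the result.

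The main obstacle is the step from the vanishing of $\phi$ outside degree $n-1$ to the statement that the other cohomology sheaves have no singularities at $\Sigma$. One must exclude both torsion-type (skyscraper) contributions at $\Sigma$ and nontrivial monodromy around points of $\Sigma$. For the skyscrapers I would use the triangle $i_c^{-1}\G \to \psi \to \phi$ degree by degree. For the monodromy, note that $\psi_{t-c}$ of a local system whose cohomology is unaffected by $\phi$ has the same stalk as the limit; then use $\pi_1(\CC)=1$ to conclude that a local system on $\CC$ extending across $\Sigma$ is trivial.
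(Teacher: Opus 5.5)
Your overall route is the paper's: you compare the spectral sequence $E_2^{p,q}=H^p_c(\CC;H^q\G)\Rightarrow H^{p+q}_c(U;\CC)$ with the known groups $H^\ast_c(U;\CC)$, and you use Lemma~\ref{vc-lem} to make the cohomology sheaves of $\G={\rm R}f_!\CC_U$ constant in high degrees. However, the degree bookkeeping at the key step is wrong. Since $\phi_{t-c}\G$ is concentrated in degree $n-1$, the long exact sequence of $i_c^{-1}\G\to\psi_{t-c}\G\to\phi_{t-c}\G\overset{+1}{\longrightarrow}$ gives $H^j(\G)_c\simto H^j\psi_{t-c}\G$ only for $j\notin\{n-1,n\}$, not for $j\notin\{n-1,n-2\}$. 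In degree $n$ you get only a surjection $H^n(\G)_c\to H^n\psi_{t-c}\G$, whose kernel is the image of $H^{n-1}\phi_{t-c}\G$. So $H^n(\G)$ is not shown to be a local system on $\CC$, and it may have skyscraper parts at points of $\Sigma$ (ruling these out would already give part of Conjecture~\ref{conj-1} for the special fibers). What the surjection does give, and what the paper uses following Sabbah, is that $\L=H^n(\G)|_{\CC\setminus S}$ has trivial monodromy, because the specialization map lands in the monodromy invariants. Combined with the constancy of $H^q(\G)$ for $q\geq n+1$, this yields $E_2^{2,n}=E_\infty^{2,n}\simeq H^{n+2}_c(U;\CC)$ and $\dim H^2_c(\CC;H^n\G)={\rm rank}\,\L$. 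Hence $H^n_c(f^{-1}(c);\CC)=0$ for $n>3$ and $H^n_c(f^{-1}(c);\CC)\simeq\CC$ for $n=3$. Note that $n=3$ is exactly the case $2n-3=n$, so your remark that $n\geq 3$ keeps the top degrees away from the middle ones fails there; the degree-$n$ analysis is what produces $H^{2n-3}_c\simeq\CC$ in that case.

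Second, nothing in your proposal controls $H^{n-2}(\G)$, yet you need $H^{n-2}_c(f^{-1}(c);\CC)=0$, which is dual to $H^n(f^{-1}(c);\CC)=0$. The spectral sequence cannot give this. Even with the corrected indexing, $H^{n-2}(\G)$ is constant and $E_2^{2,n-2}$ survives to $E_\infty$: the only possible incoming differential starts at $E_2^{0,n-1}$, which vanishes because $H^{n-1}(\G)$ injects stalkwise into its nearby cycles. So $E_2^{2,n-2}$ is merely a subspace of $H^n_c(U;\CC)\simeq H^n(U;\CC)^*$, which is in general nonzero (it is $\CC^\mu$ when $0\notin Q({\rm Sing}\,Q)$). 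The missing ingredient is Artin vanishing, as in the paper. Each fiber $f^{-1}(a)=(P-aQ^m)^{-1}(0)\setminus Q^{-1}(0)$ is an affine hypersurface on which $\CC[n-1]$ is perverse, so $H^j_c(f^{-1}(a);\CC)=0$ for $j\notin[n-1,2n-2]$, i.e.\ $H^j(\G)=0$ outside $[n-1,2n-2]$. Your guess that $\G[n-1]$ is perverse cannot replace this, and it is false anyway, since $H^{2n-2}(\G)\simeq\CC$. With these two repairs your argument becomes the paper's proof.
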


\begin{proof}
By the (generalized) Poincar\'e duality for the complex 
manifold $f^{-1}(c)$ it suffices to show that we have 
\begin{equation}
H^j_c(f^{-1}(c); \CC ) \simeq 0 \quad (j \not= n-1, 2n-3, 2n-2) 
\end{equation}
and $H^{2n-3}_c(f^{-1}(c); \CC ) \simeq H^{2n-2}_c(f^{-1}(c); \CC ) \simeq \CC$. 
Since for any $a \in \CC$ its fiber $f^{-1}(a)= 
(P-aQ)^{-1}(0) \setminus Q^{-1}(0) \subset U=X \setminus Q^{-1}(0)$ of $f$ 
is an affine variety and $\CC_{f^{-1}(a)} [n-1] \in \Dbc (f^{-1}(a))$ 
is a perverse sheaf on it, by Artin's 
vanishing theorem for perverse sheaves we have 
\begin{equation}
H^j_c(f^{-1}(a); \CC ) \simeq 0 \quad (j \notin [n-1,2n-2]) 
\end{equation}
(see e.g. \cite[Corollary 5.2.19]{Dimca}). 
This implies that we have a 
vanishing 
\begin{equation}\label{van-di}
H^j {\rm R} f_! \CC_U \simeq 0 \quad (j \notin [n-1,2n-2]).  
\end{equation}
Note also that by the (generalized) Poincar\'e duality for the complex 
manifold $U= X \setminus Q^{-1}(0)$ Propositions 
\ref{prp.L-T} and \ref{prp.gen-L-T}  
imply that we have 
\begin{equation}
H^j_c(U; \CC ) \simeq 0 \quad 
(j \not= n,n+1,2n-1,2n) 
\end{equation}
and $H^{2n-1}_c(U; \CC ) \simeq H^{2n}_c(U; \CC ) \simeq \CC$. 
Let us calculate the cohomology groups 
$H_c^j(U; \CC )$ ($j \in \ZZ$) with compact supports of $U=X \setminus Q^{-1}(0)$ 
by the Leray type spectral sequence $E_r^{p,q}$ ($r \geq 1, p,q \in \ZZ$) 
such that 
\begin{equation}
E_2^{p,q}=H^p_c( \CC ; H^q {\rm R} f_! \CC_U) \quad (p,q \in \ZZ ). 
\end{equation}
By \cite[page 43]{GM} we see that $H^j {\rm R} f_! \CC_U$ is 
an algebraic constructible sheaf on $\CC$ for any $j \in \ZZ$. 
Moreover, as in the proof of \cite[Lemma 5.3]{Sabbah-2}, 
by Lemma \ref{vc-lem} we can also show that 
$H^j {\rm R} f_! \CC_U$ is a constant sheaf on $\CC$ for any $j \geq n+1$. 
For general algebraic constructible sheaves on $\CC$ we have the 
following result. 

\begin{lemma}\label{acs-lem} 
Let $\F$ be an algebraic constructible sheaf on the affine line $\CC$. 
Then we have 
\begin{equation}
H^j_c( \CC ; \F ) \simeq 0 \quad (j \notin [0,2]).  
\end{equation}
\end{lemma} 

\begin{proof}
Let $S \subset \CC$ be the singular support of $\F$ and 
consider the exact sequence
\begin{equation}
0 \longrightarrow \F_{\CC \setminus S} \longrightarrow 
\F \longrightarrow \F_S \longrightarrow 0. 
\end{equation}
Note that $S$ is a finite subset of $\CC$ and hence 
$\F_S$ is a skyscraper sheaf on $\CC$. Moreover, 
as $\CC \setminus S$ is an affine variety, by Artin's 
vanishing theorem for perverse sheaves (see e.g. \cite[Corollary 5.2.19]{Dimca}) 
we obtain 
\begin{equation}
H^j_c( \CC ; \F_{\CC \setminus S} ) 
\simeq H^j_c( \CC \setminus S ; \F|_{\CC \setminus S}) \simeq 
0 \quad (j \notin [1,2]).  
\end{equation}
Then the assertion immediatedly follows. 
\qed
\end{proof} 
Summarizing these results, by Lemma \ref{acs-lem} we see that 
$E_2^{p,q}=0$ unless $(p,q) \in \ZZ^2$ is contained in the set 
\begin{equation}
\{ (0,n-1), (1,n-1), (2,n-1), (0,n), (1,n), (2,n), 
(2,n+1), \ldots (2, 2n-2) \} \quad \subset \ZZ^2. 
\end{equation}
This in particular implies that for $(p,q)=(2,n)$ the 
spectral sequence $E_r^{2,n}$ ($r \geq 1$) degenerates at $E_2$ 
i.e. $E_2^{2,n}=E_{\infty}^{2,n}$ and hence there exists 
an isomorphism 
\begin{equation}
E_2^{2,n}=H^2_c( \CC ; H^n {\rm R} f_! \CC_U) \simeq H^{n+2}_c(U; \CC ). 
\end{equation}
Note that if $n>3 \Longleftrightarrow 2n-1>n+2$ (resp. $n=3$) 
then we have $H^{n+2}_c(U; \CC ) \simeq 0$ (resp. 
$H^{n+2}_c(U; \CC )=H^{2n-1}_c(U; \CC ) \simeq \CC$). 
Let $S \subset \Sigma \subset \CC$ be the singular support of the 
algebraic constructible sheaf $\F_n :=
H^n {\rm R} f_! \CC_U$ on $\CC$ and denote the 
local system $\F_n |_{\CC \setminus S}$ on $\CC \setminus S$ by 
$\L$. For $c \in \CC$ let $i_c: \{ c \} \hookrightarrow \CC$ 
be the inclusion map. Then as in the proof of \cite[Lemma 8.5]{Sabbah-2} 
we obtain an exact sequence 
\begin{equation}
i_c^{-1} \F_n \longrightarrow \psi_{t-c} \F_n \longrightarrow 0. 
\end{equation}
This implies that the monodromy of the local system 
$\L$ around each point of $S$ is trivial and hence 
$\L$ is a constant sheaf on $\CC \setminus S$. By the proof of 
Lemma \ref{acs-lem} we thus find that the dimension of 
\begin{equation}
E_2^{2,n}=H^2_c( \CC ; \F_n ) \simeq H^{2}_c( \CC \setminus S; \L )
\end{equation}
is equal to the rank of $\L$ i.e. the dimension of 
$H_c^n(f^{-1}(c); \CC )$ for the generic fiber $f^{-1}(c) 
\subset X \setminus Q^{-1}(0)$ of $f$. In particular, 
if $n>3$ (resp. $n=2$) we have $H_c^n(f^{-1}(c); \CC ) \simeq 0$ 
(resp. $H_c^n(f^{-1}(c); \CC )= H_c^{2n-3}(f^{-1}(c); \CC ) 
\simeq \CC$). For $j \geq n+1$ and $(p,q)=(2,j)$ the spectral sequence 
$E_r^{2,j}$ ($r \geq 1$) degenerates at $E_2$ 
i.e. $E_2^{2,j}=E_{\infty}^{2,j}$ and hence there exists 
an isomorphism 
\begin{equation}
E_2^{2,j}=H^2_c( \CC ; H^j {\rm R} f_! \CC_U) \simeq H^{j+2}_c(U; \CC ). 
\end{equation}
Since for $j \geq n+1$ the algebraic constructible sheaf 
$H^j {\rm R} f_! \CC_U$ on $\CC$ is constant, we see also that 
the dimension of 
$E_2^{2,j}=H^2_c( \CC ; H^j {\rm R} f_! \CC_U) \simeq H^{j+2}_c(U; \CC )$ 
is equal to that of $H_c^j(f^{-1}(c); \CC )$ 
for the generic fiber $f^{-1}(c) 
\subset X \setminus Q^{-1}(0)$ of $f$. For $j \geq n+1$ and the 
generic fiber $f^{-1}(c)$ of $f$ then there exist isomorphisms 
\begin{equation}
H^j_c( f^{-1}(c); \CC ) \simeq 
\begin{cases}
\ \CC & (j=2n-3, 2n-2) \\
& \\
\ 0 & (j \not= 2n-3,2n-2) 
\end{cases}
\end{equation}
(if $n=3$ the case where $j \geq n+1=4$ and $j=2n-3=3$ does 
not occur). For the generic fiber $f^{-1}(c) 
\subset X \setminus Q^{-1}(0)$ of $f$ we thus obtain 
\begin{equation}
H^j_c(f^{-1}(c); \CC ) \simeq 0 \quad (j \not= n-1, 2n-3, 2n-2) 
\end{equation}
and $H^{2n-3}_c(f^{-1}(c); \CC ) \simeq H^{2n-2}_c(f^{-1}(c); \CC ) \simeq \CC$. 
This completes the proof. 
\qed
\end{proof}

\begin{remark}
In the case $n=2$, if a generic fiber $f^{-1}(c) 
\subset X \setminus Q^{-1}(0)$ ($c \in \CC$) of $f$ is connected then 
it is a connected Riemann surface 
with boundary and hence we have 
\begin{equation}
H^j(f^{-1}(c); \CC ) \simeq 0 \quad (j \not= 0,1) 
\end{equation}
and $H^{0}(f^{-1}(c); \CC ) \simeq \CC$. 
\end{remark} 
Motivated by Theorems \ref{tame} and \ref{MainT-1}, we 
propose the following conjectures. 

\begin{conjecture}\label{conj-1}
In the situation of Theorem \ref{MainT-1}, for 
any fiber $f^{-1}(c) \subset U=X \setminus Q^{-1}(0)$ 
($c \in \CC$) of $f: U=X \setminus Q^{-1}(0) \longrightarrow \CC$ 
we have 
\begin{equation}
H^j(f^{-1}(c); \CC ) \simeq 0 \quad (j \not= 0,1,n-1) 
\end{equation}
and $H^0(f^{-1}(c); \CC ) \simeq H^1(f^{-1}(c); \CC ) \simeq \CC$. 
\end{conjecture}

\begin{conjecture}\label{conj-2}
In the situation of Theorem \ref{MainT-1}, 
any generic fiber $f^{-1}(c) \subset U=X \setminus Q^{-1}(0)$ 
($c \in \CC \setminus \Sigma$) of $f: U=X \setminus Q^{-1}(0) \longrightarrow \CC$ 
has the homotopy type of 
\begin{equation}
S^1 \vee (S^{n-1} \vee \cdots \vee S^{n-1}).
\end{equation}
\end{conjecture}

We denote by $\mu \geq 0$ the total Milnor number of the tame function 
$Q:X= \CC^n \longrightarrow \CC$. For a point 
$a \in \CC$ we denote by $\mu_a \geq 0$ the sum of the Milnor numbers of 
$Q:X= \CC^n \longrightarrow \CC$ at the (isolated) singular points of 
$Q^{-1}(a) \subset X= \CC^n$. 
Then in the situation of Theorem \ref{MainT-1} 
we can also give a formula for the dimension of 
$H^{n-1}(f^{-1}(c); \CC )$ ($c \in \CC \setminus \Sigma$) 
as follows. In the course of the proof of 
Theorem \ref{MainT-2} below, we will give also a new proof to 
the results in Theorem \ref{MainT-1}. Let $c_1, \ldots, c_l \in \CC$ 
be the points of $\Sigma \subset \CC$ and for $1 \leq i \leq l$ 
let $q_{i1}, \ldots, q_{im_i} \in f^{-1}(c_i)$ be the 
(isolated) singular points of $f^{-1}(c_i) \subset U= 
X \setminus Q^{-1}(0)$. Then by the proof of Lemma \ref{vc-lem} 
for any $1 \leq i \leq l$ there exists an isomorphism 
\begin{equation}\label{eff-tool}
H^{n-1} \phi_{t-c_i} {\rm R} f_! \CC_U \simeq 
\Bigl( \bigoplus_{j=1}^{m_i} H^{n-1} (\phi_{f-c_i} \CC_U)_{q_{ij}} \Bigr) 
\oplus 
\Bigl( \bigoplus_{j=1}^{k} H^{n-1} (\phi_{f-c_i}^{\merc} \CC_X)_{p_{j}} \Bigr). 
\end{equation}

\begin{theorem}\label{MainT-2}
In the situation of Theorem \ref{MainT-1}, 
for any generic fiber $f^{-1}(c) \subset U=X \setminus Q^{-1}(0)$ 
($c \in \CC \setminus \Sigma$) of $f: U=X \setminus Q^{-1}(0) \longrightarrow \CC$ 
we have 
\begin{equation}
\dim H^{n-1}(f^{-1}(c); \CC ) = \mu_f-( \mu - \mu_0),  
\end{equation}
where we set 
\begin{equation}
\mu_f := \sum_{i=1}^l \dim  H^{n-1} \phi_{t-c_i} {\rm R} f_! \CC_U. 
\end{equation}
In particular, there exists an inequality 
\begin{equation}
\mu_f \geq \mu - \mu_0.  
\end{equation}
\end{theorem}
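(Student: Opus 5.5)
The plan is to compute the Euler characteristic of ${\rm R}\Gamma_c(U;\CC)$ in two ways and compare. We set $\F := {\rm R} f_! \CC_U \in \Dbc(\CC)$. Then ${\rm R}\Gamma_c(\CC;\F) \simeq {\rm R}\Gamma_c(U;\CC)$.

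\emph{Left-hand side.} By Poincar\'e duality for $U$ and Propositions \ref{prp.L-T} and \ref{prp.gen-L-T}, applied to the tame polynomial $Q$ at the value $0$, we have $\chi_c(U)=\chi(U)$. Here $U = X\setminus Q^{-1}(0)$ has $H^0\simeq H^1\simeq\CC$. If $0\notin\Sigma_Q$, then $\dim H^n(U)=\mu$ and $\mu_0=0$. In general $\dim H^n(U)-\dim H^{n-1}(U)=\mu-\mu_0$. In both cases
\begin{equation}
\chi(U) = 1-1+(-1)^n(\mu-\mu_0)=(-1)^n(\mu-\mu_0).
\end{equation}

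\emph{Right-hand side.} For a constructible complex on $\CC$ with singular points $c_1,\ldots,c_l$, we have the standard formula
\begin{equation}
\chi_c(\CC;\F)=\chi(\F_c)\,\chi_c(\CC\setminus\Sigma)+\sum_{i=1}^l \chi(\F_{c_i}) ,
\end{equation}
where $c$ is a generic point. Since $\chi_c(\CC\setminus\Sigma)=1-l$, this equals $\chi(\F_c)+\sum_i(\chi(\F_{c_i})-\chi(\F_c))$. By \eqref{dist}-type triangles $i_{c_i}^{-1}\F\to\psi_{t-c_i}\F\to\phi_{t-c_i}\F\to$, and since $\psi_{t-c_i}\F$ has the Euler characteristic of the generic stalk, we get $\chi(\F_c)-\chi(\F_{c_i})=\chi(\phi_{t-c_i}\F)$. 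By Lemma \ref{vc-lem} this is $(-1)^{n-1}\dim H^{n-1}\phi_{t-c_i}\F$. Summing gives
\begin{equation}
\chi_c(U)=\chi_c(f^{-1}(c))+(-1)^n\mu_f .
\end{equation}

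\emph{Generic fiber.} By Theorem \ref{MainT-1} and Poincar\'e duality, $\chi_c(f^{-1}(c))=\chi(f^{-1}(c))=1-1+(-1)^{n-1}\dim H^{n-1}(f^{-1}(c);\CC)$. (I am assuming $n-1\neq1$; the case $n=3$ needs a check, in which $H^1$ and $H^{n-1}$ are in different degrees anyway, since $n-1=2$.) Equating the two computations gives
\begin{equation}
(-1)^n(\mu-\mu_0)=(-1)^{n-1}\dim H^{n-1}(f^{-1}(c))+(-1)^n\mu_f ,
\end{equation}
that is, $\dim H^{n-1}(f^{-1}(c);\CC)=\mu_f-(\mu-\mu_0)$. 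Nonnegativity of the dimension then yields the inequality $\mu_f\geq\mu-\mu_0$.

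\emph{Main obstacle.} The delicate step is justifying that $\psi_{t-c_i}\F$ computes the generic fiber, i.e.\ that $\F$ restricted to $\CC\setminus\Sigma$ is locally constant with stalks ${\rm R}\Gamma_c(f^{-1}(c))$. This should follow from the definition of $\Sigma$ as containing the singular support of $\F$, together with base change $\F_c\simeq{\rm R}\Gamma_c(f^{-1}(c);\CC)$. One must also confirm the additivity formula for $\chi_c$ on $\CC$, which follows from the exact sequence used in Lemma \ref{acs-lem}.
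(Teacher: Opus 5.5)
Your argument is correct, but it takes a different route from the paper.

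\textbf{What you do.} You reduce everything to Euler characteristics:
\begin{itemize}
\item additivity of $\chi_c$ for $\F={\rm R}f_!\CC_U$ over $\CC=(\CC\setminus\Sigma)\sqcup\{c_1,\dots,c_l\}$;
\item the identity $\chi(\F_c)-\chi(\F_{c_i})=\chi(\phi_{t-c_i}\F)$;
\item Lemma~\ref{vc-lem}, which turns $\chi(\phi_{t-c_i}\F)$ into $(-1)^{n-1}\dim H^{n-1}\phi_{t-c_i}\F$;
\item Theorem~\ref{MainT-1}, which isolates $\dim H^{n-1}(f^{-1}(c);\CC)$ from $\chi(f^{-1}(c))$.
\end{itemize}
The one step beyond additivity is $\chi_c(\CC\setminus\Sigma;\L)={\rm rk}\,\L\cdot\chi_c(\CC\setminus\Sigma)$ for a local system $\L$. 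This does not literally follow from the exact sequence in Lemma~\ref{acs-lem}, but it is standard.

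\textbf{What the paper does.} It runs a backward Morse theory for $\F$ with respect to $\varphi(t)=|t-c|$. It starts from ${\rm R}\Gamma_c(\{\varphi<r\};\F)\simeq\F_c[-2]$ for $0<r\ll1$ and ends with ${\rm R}\Gamma_c(U;\CC)$ for $r\gg0$. Crossing a circle $\{\varphi=|c_i-c|\}$ adds a piece isomorphic to $\phi_{t-c_i}\F[-1]$, which is concentrated in degree $n$. So each crossing changes only $H^n_c$ and $H^{n+1}_c$ of the sublevel sets, through a long exact sequence. Consequently:
\begin{itemize}
\item $H^j_c(f^{-1}(c);\CC)\simeq H^{j+2}_c(U;\CC)$ for $j\neq n-2,n-1$;
\item the bookkeeping in degrees $n,n+1$ gives $\dim H^n_c(U)-\dim H^{n+1}_c(U)=\mu_f-\dim H^{n-1}_c(f^{-1}(c))$.
\end{itemize}
Combined with Artin vanishing for the generic fiber and Proposition~\ref{prp.gen-L-T}, this yields both the formula and the conclusions of Theorem~\ref{MainT-1}. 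That is why the paper says its proof also gives a new proof of Theorem~\ref{MainT-1}.

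\textbf{Comparison.} Your version is shorter and logically complete, since Theorem~\ref{MainT-1} is already established by the Leray spectral sequence argument. It is, however, only the alternating-sum shadow of the paper's argument. Without Theorem~\ref{MainT-1} as input you could not extract $\dim H^{n-1}$. The paper's route uses the single-degree concentration of Lemma~\ref{vc-lem} at full strength and obtains degree-wise isomorphisms, not just an Euler characteristic identity.

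Your parenthetical worry about $n=3$ is unnecessary. Since $n\geq 3$, we have $n-1\geq 2$, so $H^{n-1}$ never coincides with $H^0$ or $H^1$.
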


\begin{proof}
We define a function $\varphi : \CC \longrightarrow \RR$ by 
$\varphi (t):= |t-c|$ ($t \in \CC$). Then we prove the assertion 
by applying a backward Morse theory to ${\rm R} f_! \CC_U$ and $\varphi$. 
First of all, for sufficiently large $r \gg 0$ there exists an isomorphism 
\begin{equation}
{\rm R}\Gamma_c( \{ \varphi <r  \} ; {\rm R} f_! \CC_U) 
\simeq {\rm R}\Gamma_c(U; \CC_U)
\end{equation}
and hence by Propositions \ref{prp.L-T} and \ref{prp.gen-L-T} we have 
\begin{equation}
{\rm R}\Gamma_c( \{ \varphi <r  \} ; {\rm R} f_! \CC_U)  \simeq 0 \quad 
(j \not= n,n+1,2n-1,2n-2) 
\end{equation}
and $H^{2n-1}_c( \{ \varphi <r  \} ; {\rm R} f_! \CC_U) \simeq 
H^{2n}_c( \{ \varphi <r  \} ; {\rm R} f_! \CC_U) \simeq \CC$. Moreover we have 
\begin{equation}
\dim H^n_c ( \{ \varphi <r  \} ; {\rm R} f_! \CC_U) - 
\dim H^{n+1}_c ( \{ \varphi <r  \} ; {\rm R} f_! \CC_U)= 
\mu - \mu_0. 
\end{equation}
For a fixed $r>0$ and $0< \e \ll 1$ let us consider how the 
cohomology groups of 
${\rm R}\Gamma_c( \{ \varphi <r- \e \} ; {\rm R} f_! \CC_U)$ 
change from those of 
${\rm R}\Gamma_c( \{ \varphi <r+ \e \} ; {\rm R} f_! \CC_U)$. 
For this purpose, we can use the distinguished triangle 
\begin{align}\label{d-t-3} 
\begin{split} 
& {\rm R}\Gamma_c( \{ \varphi <r- \e \} ; {\rm R} f_! \CC_U)
 \longrightarrow {\rm R}\Gamma_c ( \{ \varphi <r+ \e \} ; {\rm R} f_! \CC_U)
\\ 
& \longrightarrow 
{\rm R}\Gamma_c( \{ r- \e \leq \varphi <r+ \e   \} ; {\rm R} f_! \CC_U)
\overset{+1}{\longrightarrow}. 
\end{split} 
\end{align}
For $1 \leq i \leq l$ such that $\varphi (c_i)=|c_i-c|=r$ and 
$\delta >0$ such that $0< \e < \delta \ll 1$ we set 
$D_{\delta}(c_i):= \{ t \in \CC \ | \ |t-c_i| < \delta \} 
\subset \CC$. Then we define a closed subset 
$K \subset \CC$ by $K:= \sqcup_{i: \varphi (c_i)=r} 
\overline{D_{\delta}(c_i)}$ to obtain a 
distinguished triangle 
\begin{align}
\begin{split}
& {\rm R}\Gamma_c( \{ r- \e \leq \varphi <r+ \e   \} \setminus K ; {\rm R} f_! \CC_U)
 \longrightarrow {\rm R}\Gamma_c ( \{  r- \e \leq \varphi <r+ \e \} ; {\rm R} f_! \CC_U)
\\ 
& \longrightarrow 
{\rm R}\Gamma_c( \{ r- \e \leq \varphi <r+ \e   \} \cap K ; {\rm R} f_! \CC_U)
\overset{+1}{\longrightarrow}. 
\end{split}
\end{align}
We can easily see that 
\begin{equation}
{\rm R}\Gamma_c( \{ r- \e \leq \varphi <r+ \e   \} \setminus K ; {\rm R} f_! \CC_U)
\simeq 
{\rm R}\Gamma_c( \CC ; (
{\rm R} f_! \CC_U)_{\{ r- \e \leq \varphi <r+ \e   \} \setminus K} )
\simeq 0. 
\end{equation}
Moreover, for any $1 \leq i \leq l$ such that $\varphi (c_i)=|c_i-c|=r$ 
there exist isomorphisms 
\begin{align}
\begin{split}
& {\rm R}\Gamma_c( \{ r- \e \leq \varphi <r+ \e   \} \cap \overline{D_{\delta}(c_i)}
 ; {\rm R} f_! \CC_U)
\\
& \simeq 
{\rm R}\Gamma_c( \CC ; (
{\rm R} f_! \CC_U)_{\{ r- \e \leq \varphi <r+ \e   \} 
\cap \overline{D_{\delta}(c_i)}} )
\simeq 
{\rm R}\Gamma_{\{ \varphi \leq r \}}( \overline{D_{\delta}(c_i)}; 
{\rm R} f_! \CC_U)
\\
& \simeq 
\phi_{t-c_i} {\rm R} f_! \CC_U [-1].
\end{split}
\end{align}
By Lemma \ref{vc-lem}, this implies that we have 
\begin{equation}
H^j_c( \{ r- \e \leq \varphi <r+ \e   \} ; {\rm R} f_! \CC_U)
\simeq 0 \quad (j \not= n) 
\end{equation}
and 
\begin{equation}
\dim H^n_c( \{ r- \e \leq \varphi <r+ \e   \} ; {\rm R} f_! \CC_U)
= \sum_{i: \varphi (c_i)=r} \dim H^{n-1} \phi_{t-c_i} {\rm R} f_! \CC_U. 
\end{equation}
On the other hand, for $0<r \ll 1$ it follows from the 
condition $c \notin \Sigma$ that there exists an isomorphism 
\begin{equation}
{\rm R}\Gamma_c( \{ \varphi <r  \} ; {\rm R} f_! \CC_U)[2] 
\simeq ({\rm R} f_! \CC_U)_c. 
\end{equation}
By \eqref{van-di} for $0<r \ll 1$ we thus obtain a vanishing 
\begin{equation}
H^j_c( \{ \varphi <r  \} ; {\rm R} f_! \CC_U) 
\simeq 0 \quad (j \notin [n+1, 2n]). 
\end{equation}
Combining these results together, 
the assertion immediately follows from the 
cohomology long exact sequence associated to 
the distinguished triangle \eqref{d-t-3}. 
This completes the proof. 
\qed
\end{proof} 
Finally, motivated by Theorem \ref{MainT-2}, we 
strengthen Conjecture \ref{conj-2} as follows.

\begin{conjecture}\label{conj-3}
In the situation of Theorem \ref{MainT-2}, 
any generic fiber $f^{-1}(c) \subset U=X \setminus Q^{-1}(0)$ 
($c \in \CC \setminus \Sigma$) of $f: U=X \setminus Q^{-1}(0) \longrightarrow \CC$ 
has the homotopy type of 
\begin{equation}
S^1 \vee (S^{n-1} \vee \cdots \vee S^{n-1}), 
\end{equation}
where $S^{n-1} \vee \cdots \vee S^{n-1}$ is 
the bouquet of the $\mu_f-( \mu - \mu_0)$ copies of the 
$(n-1)$-sphere $S^{n-1}$.
\end{conjecture}



\begin{thebibliography}{99}

\bibitem{A-G-L-N} 
{\`A}lvarez Montaner, J., Gonz{\'a}lez Villa, M., 
Le{\'o}n-Cardenal, E. and N{\'u\~n}ez-Betancourt, L., 
\emph{Bernstein-Sato polynomial and related invariants 
for meromorphic functions}, 
Trans. Amer. Math. Soc., 378, No. 7 (2025): 4929-4954. 

\bibitem{ALM-1}
Artal Bartolo, E., Luengo, I. and Melle-Hern\'andez, A. 
\emph{Milnor number at infinity, topology and 
Newton boundary of a polynomial function}, 
Mathematische Zeitschrift, 233 (2000): 679-696. 

\bibitem{B-P}
Bodin, A. and Pichon, A. 
\emph{Meromorphic functions, bifurcation sets 
and fibred links}, Math. Res. Lett., 
14, No. 3 (2007): 413-422. 

\bibitem{B-P-S}
Bodin, A., Pichon, A. and Seade, J. 
\emph{Milnor fibrations of meromorphic functions}, 
J. London Math. Soc., 
80, No. 2 (2009): 311-325. 

\bibitem{Broughton}
Broughton, S. A. \emph{Milnor numbers and the topology of 
polynomial hypersurfaces}, Invent. Math., 92 (1988): 217-241.

\bibitem{C-R}
Cassou-Nogu\`es, P. and Raibaut,  M.  
\emph{Newton transformations and motivic invariants at infinity of plane curves}, 
Mathematische Zeitschrift, 299 (2021): 591-669.

\bibitem{Deligne}
Deligne, P. 
\emph{Le formalisme des cycles 
\'evanescents, in SGA7 XIII and XIV}, 
Lecture Notes in Math., Springer-Verlag, Berlin,, vol. 340,
(1973), 82--115 and 116--164.

\bibitem{Dimca2}
Dimca, A. \emph{Monodromy at infinity for polynomials in two variables}, 
J. Alg. Geom., 7 (1998): 771-779.

\bibitem{Dimca}
Dimca, A. \emph{Sheaves in topology}, Universitext, Springer-Verlag, Berlin, 2004.

\bibitem{D-N}
Dimca, A. and N{\'e}methi, A. 
\emph{On the monodromy of complex polynomials}, 
Duke Math. J., 108 (2001): 199-209.

\bibitem{D-S-1}
Dimca, A. and Saito, M. 
\emph{Monodromy at infinity and the weights of cohomology}, 
Compositio Math., 138 (2003): 55-71.

\bibitem{Douai}
Douai, A. \emph{A note on the (toric) Newton spectrum of a polynomial}, 
arXiv:1810.03901, preprint. 

\bibitem{E-T}
Esterov, A. and Takeuchi, K. 
\emph{Motivic Milnor fibers over complete 
intersection varieties and their virtual 
Betti numbers}, Int. Math. Res. Not., 
Vol. 2012, No. 15 (2012): 3567-3613. 

\bibitem{L-N-2}
Garc{\'i}a L{\'o}pez, R. and N{\'e}methi, A. 
\emph{Hodge numbers attached to a polynomial map}, 
Ann. Inst. Fourier, 49 (1999): 1547-1579.

\bibitem{GM}
Goresky, M. and MacPherson, R.  
\emph{Stratified Morse Theory},
Ergebnisse der Mathematik und ihrer Grenzgebiete (3), vol. 14,
Springer-Verlag, Berlin, 1988.

\bibitem{G-L-M}
Guibert, G., Loeser, F. and Merle, M. 
\emph{Iterated vanishing cycles, convolution, and a motivic analogue of a 
conjecture of Steenbrink}, Duke Math. J., 132 (2006): 409-457.

\bibitem{GZ-L-MH}
Gusein-Zade, S., Luengo, I. and Melle-Hern\'andez, A. 
\emph{Zeta-functions for germs of meromorphic 
functions and Newton diagrams}, 
Funct. Anal. Appl., 32, No. 2 (1998): 26-35.

\bibitem{GZ-L-MH-new}
Gusein-Zade, S., Luengo, I. and Melle-Hern\'andez, A. 
\emph{On the zeta-function of a polynomial 
at infinity}, Bull. Sci. Math., 124, No. 3 (2000): 213-224.  

\bibitem{GZ-L-MH-new-new}
Gusein-Zade, S., Luengo, I. and Melle-Hern\'andez, A. 
\emph{Bifurcations and topology of meromorphic germs}, 
New developments in singularity theory 
(Cambridge 2000), volume 21 of NATO Sci. Ser. 
II Math. Phys. Chem., pages 279-304.
Kluwer Acad. Publ., Dordrecht, 2001. 

\bibitem{H-T-T}
Hotta, R., Takeuchi, K. and Tanisaki, T. 
\emph{D-modules, perverse sheaves, and 
representation theory}, Birkh{\"a}user Boston, 2008.

\bibitem{K-S}
Kashiwara, M. and Schapira, P. \emph{Sheaves on manifolds}, Springer-Verlag, 1990.

\bibitem{Ka-St-2}
Katz, E. and Stapledon, A. 
\emph{Tropical geometry, the motivic nearby fiber 
and limit mixed Hodge numbers of hypersurfaces},
Res. Math. Sci., 3:10 (2016): 36pp.

\bibitem{L-S}
Libgober, A. and Sperber, S. \emph{On the zeta function of monodromy of
 a polynomial map}, Compositio Math., 95 (1995): 287-307.

\bibitem{L-T}
Libgober, A. and Tib{\u a}r, M. \emph{Homotopy groups of complements 
and nonisolated singularities}, Int. Math. Res. Not., 
Vol. 2002, No. 17 (2002): 871-888. 

\bibitem{M-T-new3}
Matsui, Y. and Takeuchi,  K.  \emph{Monodromy zeta functions at infinity, 
Newton polyhedra and constructible sheaves}, 
Mathematische Zeitschrift, 268 (2011): 409-439.

\bibitem{M-T-3}
Matsui, Y. and Takeuchi, K. \emph{Monodromy 
at infinity of polynomial maps and 
Newton polyhedra, with Appendix by C. Sabbah}, 
Int. Math. Res. Not., 
Vol. 2013, No. 8 (2013): 1691-1746. 

\bibitem{M-T-2}
Matsui, Y. and Takeuchi, K. \emph{On the sizes of the Jordan blocks 
of monodromies at infinity}, 
Hokkaido Math. Journal, 44, (2015): 313-326. 

\bibitem{M-M}
Maxim, L. and Messina, K. \emph{On the topology of fibers of complex polynomial maps},
arXiv:2608.04320, (2026). 

\bibitem{Milnor}	
Milnor, J. \emph{Singular points of complex hypersurfaces}, 
Princeton University Press, 1968.


\bibitem{N-Z-2}
N{\'e}methi, A. and Zaharia, A. 
\emph{Milnor fibration at infinity}, 
Indag. Math., 
3, no.3 (1992): 323-335.

\bibitem{N-N}
Neumann, W. D. and Norbury, P. \emph{Vanishing cycles and monodromy of 
complex polynomials}, Duke Math. J., 101 (2000): 487-497.

\bibitem{Thang} 
Nguyen, T. T., \emph{On the topology of rational
functions in two complex variables}, 
Acta. Math. Vietnam. 37(2) (2012): 171-187.

\bibitem{N-S-T} 
Nguyen, T. T., Saito T. and Takeuchi, K., 
\emph{The bifurcation set of a rational function 
via Newton polytopes}, 
Mathematische Zeitschrift, 
298 (2021): 899-916.

\bibitem{N-T} 
Nguyen, T. T. and Takeuchi, K., 
\emph{Meromorphic nearby cycle functors and 
monodromies of meromorphic functions 
(with Appendix by T. Saito)}, 
Revista Matematica Complutense, 36 
(2023): 663-705. 

\bibitem{Paru} 
Parusinski, A., \emph{A note on singularities at infinity 
of complex polynomials}, in 
Symplectic singularities and geometry of gauge fields, 
Banach Center Publ., 39 (1997): 131-141. 

\bibitem{Raibaut-1.5}
Raibaut, M. \emph{Singularit\'es  \`a l'infini 
et int\'egration motivique}, Bull. Soc. Math. France, 
140 (1), (2012): 51-100. 

\bibitem{Raibaut-1.8}
Raibaut, M. \emph{Fibre de Milnor motivique \`a 
 l'infini et composition avec un polyn\^ome non 
d\'eg\'en\'er\'e}, Ann. Inst. Fourier, 
62, no.5 (2012): 1943-1981.  

\bibitem{Raibaut-2}
Raibaut, M. \emph{Motivic Milnor fibers of a rational function}, 
Revista Matematica Complutense, 
26, (2013): 705-734. 

\bibitem{Sabbah}
Sabbah, C. \emph{Monodromy at infinity and Fourier transform}, 
Publ. Res. Inst. Math. Sci., 33 (1997): 643-685.

\bibitem{Sabbah-2}
Sabbah, C. \emph{Hypergeometric periods for a tame polynomial}, 
Port. Math., 63 (2006): 173-226.

\bibitem{S-T}
Saito, T. and Takeuchi, K. 
\emph{On the monodromies and the limit mixed 
Hodge structures of families of algebraic varieties}, 
Michigan Mathematical Journal, 73, no.3 
(2023): 631-668. 

\bibitem{Sch}
Sch\"urmann, J. \emph{Topology of singular spaces and 
constructible sheaves}, Mathematics 
Institute of the Polish Academy of Sciences. 
Mathematical Monographs (New Series), 63. 
Birkh{\"a}user Basel, 2003.

\bibitem{S-T-1}
Siersma, D. and Tib{\u a}r, M. \emph{Singularities at infinity and their 
vanishing cycles}, Duke Math. J., 80 (1995): 771-783.

\bibitem{S-T-2}
Siersma, D. and Tib{\u a}r, M. \emph{
Vanishing cycles and singularities of meromorphic 
functions}, Contemp. Math., 354 (2004): 277-289.

\bibitem{Stapledon} 
Stapledon, A. \emph{Formulas for monodromy}, 
Res. Math. Sci., 4:8 (2017): 42pp.

\bibitem{Tak-2}
Takeuchi, K. \emph{On a Bernstein-Sato polynomial of a meromorphic function}, 
Nagoya Math. Journal, 251 (2023): 715-733.

\bibitem{Tak-3}
Takeuchi, K. \emph{Geometric monodromies, mixed Hodge numbers of motivic Milnor fibers and Newton polyhedra},
Handbook of Geometry and Topology of Singularities VII, (2025),  643-720, Springer

\bibitem{Tak-4}
Takeuchi, K. \emph{Fourier transforms of irregular holonomic D-modules, 
singularities at infinity of meromorphic functions 
and irregular characteristic cycles}, 
arXiv:2211.04113, preprint. 

\bibitem{T-T}
Takeuchi, K. and Tib{\u a}r, M. \emph{Monodromies 
at infinity of non-tame polynomials}, 
Bull. Soc. Math. France, 
144, no.3 (2016): 477-506.

\bibitem{Tibar}
Tib{\u a}r, M. \emph{Singularities and topology of meromorphic functions}, 
in Trends in singularities, Birkh{\"a}user Verlag (2002): 223-246.

\bibitem{Tibar-book}
Tib{\u a}r, M. 
\emph{Polynomials and vanishing cycles}, 
Cambridge University Press, 2007.



\end{thebibliography}
\end{document}